\documentclass{conm-p-l}
\usepackage{amssymb}

\usepackage{amsfonts}
\usepackage{eucal}
\usepackage{upgreek}
%
%

\begin{document}

\font\eightrm=cmr8
\font\eightit=cmti8
\font\eighttt=cmtt8
\def\tci
{\hbox{\hskip1.8pt$\rightarrow$\hskip-11.5pt$^{^{C^\infty}}$\hskip-1.3pt}}
\def\nft
{\hbox{$n$\hskip3pt$\equiv$\hskip4pt$5$\hskip4.4pt$($mod\hskip2pt$3)$}}
\def\bbR{\mathrm{I\!R}}
\def\rto{\bbR\hskip-.5pt^2}
\def\rtr{\bbR\hskip-.7pt^3}
\def\rfo{\bbR\hskip-.7pt^4}
\def\rn{\bbR^{\hskip-.6ptn}}
\def\mr{\bbR^{\hskip-.6ptm}}
\def\bbZ{\mathsf{Z\hskip-4ptZ}}
\def\bbRP{\text{\bf R}\text{\rm P}}
\def\bbC{\text{\bf C}}
\def\cx{C\hskip-2pt_x\w}
\def\cy{C\hskip-2pt_y\w}
\def\cz{C\hskip-2pt_z\w}
\def\hyp{\hskip.5pt\vbox
{\hbox{\vrule width3ptheight0.5ptdepth0pt}\vskip2.2pt}\hskip.5pt}
\def\er{r}
\def\es{s}
\def\df{d\hskip-.8ptf}
\def\fv{\mathcal{F}}
\def\fvp{\fv_{\nrmh p}}
\def\wv{\mathcal{W}}
\def\vt{\mathcal{P}}
\def\tv{\mathcal{T}}
\def\vtx{\vt_{\nh x}}
\def\fh{f}
\def\g{\mathtt{g}}
\def\rc{\theta}
\def\jm{\mathcal{I}}
\def\ke{\mathcal{K}}
\def\xc{\mathcal{X}_c}
\def\lz{\mathcal{L}}
\def\dla{\mathcal{D}_{\hskip-2ptL}^*}
\def\dxa{\mathcal{D}_{\hskip-1.3ptx}^*}
\def\Lie{\pounds}
\def\lv{\Lie\hskip-1.2pt_v\w}
\def\lo{\lz_0}
\def\xe{\mathcal{E}}
\def\eo{\xe_0}
\def\lsq{\mathsf{[}}
\def\rsq{\mathsf{]}}
\def\hm{\hskip1.9pt\widehat{\hskip-1.9ptM\hskip-.2pt}\hskip.2pt}
\def\hmt{\hskip1.9pt\widehat{\hskip-1.9ptM\hskip-.5pt}_t}
\def\hmz{\hskip1.9pt\widehat{\hskip-1.9ptM\hskip-.5pt}_0}
\def\hmp{\hskip1.9pt\widehat{\hskip-1.9ptM\hskip-.5pt}_p}
\def\hg{\g}
\def\hk{\hskip1.5pt\widehat{\hskip-1.5ptK\hskip-.5pt}\hskip.5pt}
\def\hq{\hskip1.5pt\widehat{\hskip-1.5ptQ\hskip-.5pt}\hskip.5pt}
\def\txm{{T\hskip-3.5pt_x\w M}}
\def\tyhm{{T\hskip-3.5pt_y\w\hm}}
\def\q{q}
\def\bq{\hat q}
\def\p{p}
\def\w{^{\phantom i}}
\def\x{v}
\def\y{y}
\def\vp{\vt^\perp}
\def\vd{\vt\hh'}
\def\vdx{\vd{}\hskip-4.5pt_x}
\def\bz{b\hh}
\def\fe{F}
\def\fy{\phi}
\def\vl{\Lambda}
\def\hy{\mathcal{V}}
\def\vh{h}
\def\mv{V}
\def\vo{V_{\nnh0}}
\def\ao{A_0}
\def\bo{B_0}
\def\uv{\mathcal{U}}
\def\sv{\mathcal{S}}
\def\svp{\sv_p}
\def\xv{\mathcal{X}}
\def\xvp{\xv_p}
\def\yv{\mathcal{Y}}
\def\yvp{\yv_p}
\def\zv{\mathcal{Z}}
\def\zvp{\zv_p}
\def\cv{\mathcal{C}}
\def\dy{\mathcal{D}}
\def\nv{\mathcal{N}}
\def\iv{\mathcal{I}}
\def\gkp{\Sigma}
\def\ret{\sigma}
\def\taw{\uptau}
\def\hs{\hskip.7pt}
\def\hh{\hskip.4pt}
\def\hn{\hskip-.4pt}
\def\nh{\hskip-.7pt}
\def\nnh{\hskip-1pt}
\def\hrz{^{\hskip.5pt\text{\rm hrz}}}
\def\vrt{^{\hskip.2pt\text{\rm vrt}}}
\def\vt{\varTheta}
\def\op{\varTheta}
\def\vg{\varGamma}
\def\my{\mu}
\def\ny{\nu}
\def\gy{\lambda}
\def\lp{\lambda}
\def\ax{\alpha}
\def\lf{\widetilde{\lp}}
\def\bx{\beta}
\def\ay{a}
\def\by{b}
\def\gp{\mathrm{G}}
\def\hp{\mathrm{H}}
\def\kp{\mathrm{K}}
\def\gm{\gamma}
\def\Gm{\Gamma}
\def\Lm{\Lambda}
\def\Dt{\Delta}
\def\dg{\Delta}
\def\sj{\sigma}
\def\lg{\langle}
\def\rg{\rangle}
\def\lr{\lg\hh\cdot\hs,\hn\cdot\hh\rg}
\def\vs{vector space}
\def\rvs{real vector space}
\def\vf{vector field}
\def\tf{tensor field}
\def\tvn{the vertical distribution}
\def\dn{distribution}
\def\pt{point}
\def\tc{tor\-sion\-free connection}
\def\ea{equi\-af\-fine}
\def\rt{Ric\-ci tensor}
\def\pde{partial differential equation}
\def\pf{projectively flat}
\def\pfs{projectively flat surface}
\def\pfc{projectively flat connection}
\def\pftc{projectively flat tor\-sion\-free connection}
\def\su{surface}
\def\sco{simply connected}
\def\psr{pseu\-\hbox{do\hs-}Riem\-ann\-i\-an}
\def\inv{-in\-var\-i\-ant}
\def\trinv{trans\-la\-tion\inv}
\def\feo{dif\-feo\-mor\-phism}
\def\feic{dif\-feo\-mor\-phic}
\def\feicly{dif\-feo\-mor\-phi\-cal\-ly}
\def\Feicly{Dif-feo\-mor\-phi\-cal\-ly}
\def\diml{-di\-men\-sion\-al}
\def\prl{-par\-al\-lel}
\def\skc{skew-sym\-met\-ric}
\def\sky{skew-sym\-me\-try}
\def\Sky{Skew-sym\-me\-try}
\def\dbly{-dif\-fer\-en\-ti\-a\-bly}
\def\cs{con\-for\-mal\-ly symmetric}
\def\cf{con\-for\-mal\-ly flat}
\def\ls{locally symmetric}
\def\ecs{essentially con\-for\-mal\-ly symmetric}
\def\rr{Ric\-ci-re\-cur\-rent}
\def\kf{Killing field}
\def\om{\omega}
\def\vol{\varOmega}
\def\dv{\delta}
\def\ve{\varepsilon}
\def\zt{\zeta}
\def\kx{\kappa}
\def\mf{manifold}
\def\mfd{-man\-i\-fold}
\def\bmf{base manifold}
\def\bd{bundle}
\def\tbd{tangent bundle}
\def\ctb{cotangent bundle}
\def\bp{bundle projection}
\def\prc{pseu\-\hbox{do\hs-}Riem\-ann\-i\-an metric}
\def\prd{pseu\-\hbox{do\hs-}Riem\-ann\-i\-an manifold}
\def\Prd{pseu\-\hbox{do\hs-}Riem\-ann\-i\-an manifold}
\def\npd{null parallel distribution}
\def\pj{-pro\-ject\-a\-ble}
\def\pd{-pro\-ject\-ed}
\def\lcc{Le\-vi-Ci\-vi\-ta connection}
\def\vb{vector bundle}
\def\vbm{vec\-tor-bun\-dle morphism}
\def\kerd{\text{\rm Ker}\hskip2.7ptd}
\def\ro{\rho}
\def\sy{\sigma}
\def\ts{total space}
\def\pmb{\pi}

\newtheorem{theorem}{Theorem}[section] 
\newtheorem{proposition}[theorem]{Proposition} 
\newtheorem{lemma}[theorem]{Lemma} 
\newtheorem{corollary}[theorem]{Corollary} 
  
\theoremstyle{definition} 
  
\newtheorem{defn}[theorem]{Definition} 
\newtheorem{notation}[theorem]{Notation} 
\newtheorem{example}[theorem]{Example} 
\newtheorem{conj}[theorem]{Conjecture} 
\newtheorem{prob}[theorem]{Problem} 
  
\theoremstyle{remark} 
  
\newtheorem{remark}[theorem]{Remark}

\renewcommand{\theequation}{\arabic{section}.\arabic{equation}}

\title[Compact rank-one ECS manifolds]{The topology of compact rank-one ECS
manifolds}
\author[A. Derdzinski]{Andrzej Derdzinski} 
\address{Department of Mathematics, The Ohio State University, 
Columbus, OH 43210, USA} 
\email{andrzej@math.ohio-state.edu} 
\author[I.\ Terek]{Ivo Terek} 
\address{Department of Mathematics, The Ohio State University, 
Columbus, OH 43210, USA} 
\email{terekcouto.1@osu.edu} 
\subjclass[2020]{Primary 53C50}
\def\leftmark{A.\ Derdzinski \&\ I.\ Terek}
\def\rightmark{Compact rank-one ECS manifolds}

\begin{abstract}
Pseu\-do\hs-Riem\-ann\-i\-an manifolds with parallel Weyl tensor that are not
con\-for\-mal\-ly flat or locally symmetric, also known as ECS manifolds,
have a natural local invariant, the rank, which equals 1 or 2, and
is the rank of a certain distinguished null parallel distribution 
$\,\mathcal{D}$. All known examples of compact ECS manifolds are of rank one
and have dimensions greater than 4. We prove that a compact rank-one ECS
manifold, if not locally homogeneous, replaced when necessary by a two-fold
isometric covering, must be a bundle over the circle with 
leaves of $\,\mathcal{D}^\perp$ serving as the fibres. The same conclusion
holds in the lo\-cal\-ly-ho\-mo\-ge\-ne\-ous case if one assumes that 
$\,\mathcal{D}^\perp$ has at least one compact leaf. We also show
that in the pseu\-do\hs-Riem\-ann\-i\-an universal covering space of any
compact rank-one ECS manifold the leaves of $\,\mathcal{D}^\perp$ are the
factor manifolds of a global product decomposition.
\end{abstract}

\maketitle

\setcounter{section}{0}
\setcounter{theorem}{0}
\renewcommand{\thetheorem}{\Alph{theorem}}
\section*{Introduction}
\setcounter{equation}{0}
Pseu\-\hbox{do\hskip.7pt-}Riem\-ann\-i\-an manifolds (or metrics) in
dimensions $\,n\ge4\,$ with parallel Weyl tensor $\,W\hs$ are
often called {\it con\-for\-mal\-ly symmetric\/}
\cite{chaki-gupta}. One speaks of {\it ECS man\-i\-folds/\hn met\-rics\/}
\cite{derdzinski-roter-07} when,
in addition, the metric is neither con\-for\-mal\-ly flat nor locally 
symmetric, 
`ECS' being short for {\it essentially 
con\-for\-mal\-ly symmetric}. 

ECS metrics exist in every dimension $\,n\ge4$, as shown by Roter 
\cite[Corol\-lary~3]{roter}, who also proved that they are all indefinite
\cite[Theorem~2]{derdzinski-roter-77}. The local structure of all 
ECS metrics is described in \cite{derdzinski-roter-09}.

Given an ECS manifold $\,(M\nh,\g)$, we define its {\it rank\/}
\cite{derdzinski-terek-ne} to be the rank $\,d\in\{1,2\}$ of its {\it 
Ol\-szak distribution\/} $\,\mathcal{D}$, which is a null parallel
distribution on $\,M$ discovered by Ol\-szak \cite{olszak}.
The sections of $\,\mathcal{D}\,$ are the vector fields $\,v\,$ having the
property that 
$\,\g(v,\,\cdot\,)\wedge[W(v'\nh,v''\nh,\,\cdot\,,\,\cdot\,)]=0\,$ for all
vector fields $\,v'\nh,v''\nh$. Every Lo\-rentz\-i\-an ECS manifold has rank
one, as the Lo\-rentz\-i\-an signature limits the ranks of null 
distributions to at most $\,1$. For more details, see 
\cite[p.\ 119]{derdzinski-roter-09}. 

Compact rank-one ECS manifolds are known to exist in all dimensions 
$\,n\ge5$, where they represent all indefinite metric signatures
\cite{derdzinski-roter-10,derdzinski-terek-ne}. There are also noncompact
locally homogeneous ECS manifolds \cite{derdzinski-78} of every dimension
$\,n\ge4$. More recently, in \cite{derdzinski-terek-cl}, we constructed
examples of compact locally homogeneous ECS manifolds of all odd dimensions 
$\,n\ge5$.

Our main result can be phrased as follows.
\begin{theorem}\label{maith}
Every non-lo\-cal\-ly-ho\-mo\-ge\-ne\-ous compact rank-one ECS manifold with
trans\-ver\-sal\-ly-o\-ri\-ent\-able distribution\/ $\,\mathcal{D}^\perp\nnh$ 
is dif\-feo\-mor\-phic to a bundle over the circle in such a way that the
fibres coincide with the leaves of $\,\mathcal{D}^\perp\nnh$. This conclusion
remains valid in the lo\-cal\-ly-ho\-mo\-ge\-ne\-ous case, as long as 
$\,\mathcal{D}^\perp\hskip-2pt$ is assumed to have at least one compact leaf.
\end{theorem}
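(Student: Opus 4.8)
The plan is to turn the parallel distribution $\mathcal{D}^\perp$ into the fibre foliation of a submersion onto a circle, with the entire obstruction encoded in how $\pi_1(M)$ acts on a single affine transverse coordinate. Since $\mathcal{D}$ is null and parallel, $\mathcal{D}^\perp$ is a parallel distribution of codimension one, hence integrable with totally geodesic leaves, and these are precisely the leaves named in the statement. Passing, when necessary, to the two-fold covering that orients $\mathcal{D}$, I may assume $\mathcal{D}$ carries a parallel orientation. On the pseu\-do\hs-Riem\-ann\-i\-an universal covering $\widetilde M$ parallel transport is path-independent, so the parallel line bundle $\widetilde{\mathcal{D}}$ admits a global parallel section $v$, and $\xi=\g(v,\,\cdot\,)$ is then a parallel, hence closed, nowhere-zero $1$-form with $\ker\xi=\widetilde{\mathcal{D}}^\perp$. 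Integrating $\xi=dt$ produces an affine function $t\colon\widetilde M\to\mathbb{R}$ whose level sets are the leaves of $\widetilde{\mathcal{D}}^\perp$; completeness forced by compactness of $M$ makes $t$ a surjective submersion with connected fibres, realizing the global product decomposition recorded for $\widetilde M$.

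Next I would analyze the deck action. Let $\Gamma=\pi_1(M)$ act on $\widetilde M$ by isometries. For each $\gamma\in\Gamma$ the pullback $\gamma^*v$ is again a parallel section of $\widetilde{\mathcal{D}}$, so $\gamma^*v=c_\gamma v$ with $c_\gamma\in\mathbb{R}^\times$, and the orienting two-fold covering arranges $c_\gamma>0$. Consequently $\gamma^*\xi=c_\gamma\xi$ and $t\circ\gamma=c_\gamma t+b_\gamma$, so $\Gamma$ acts on the target line by the orientation-preserving affine representation $\gamma\mapsto(s\mapsto c_\gamma s+b_\gamma)$, with $c\colon\Gamma\to\mathbb{R}^+$ a homomorphism. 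Because the leaves of $\mathcal{D}^\perp$ on $M$ are the $\Gamma$-images of the level sets of $t$, the map $t$ descends to a submersion $M\to\mathbb{R}/\{b_\gamma\}$ exactly when $c\equiv1$, and, by Ehresmann's theorem applied to the proper submersion so obtained, this descent is a fibre bundle over the circle with the prescribed fibres precisely when, in addition, the translation group $\{b_\gamma\}\subset\mathbb{R}$ is infinite cyclic. Thus everything reduces to showing that the affine monodromy is a lattice of translations.

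This reduction I would carry out through the local structure theory of rank-one ECS metrics \cite{derdzinski-roter-09}: along the coordinate $t$ the metric is governed by a single non\-con\-stant defining function $f$, and isometry invariance under each deck transformation ties the value of $f$ at $c_\gamma s+b_\gamma$ to its value at $s$. Non-local-homogeneity then makes $f$ rigid. A genuine rescaling $c_\gamma\neq1$ would render $f$ invariant under a true homothety, and such self-similar profiles are exactly those producing the locally homogeneous models \cite{derdzinski-78}; hence in the non-homogeneous case $c\equiv1$. Once the action is translational, a non-discrete period set $\{b_\gamma\}$ would force $f$ to agree with itself along a dense set, hence to be constant by continuity, contradicting the ECS hypothesis (a constant profile yields a locally symmetric or con\-for\-mal\-ly flat metric). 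Therefore $\{b_\gamma\}$ is discrete, and cocompactness of the $\Gamma$-action, coming from compactness of $M$, upgrades this to $\{b_\gamma\}=\mathbb{Z}\tau$, so $t$ descends to the desired bundle projection $M\to\mathbb{R}/\mathbb{Z}\tau=S^1$. In the locally homogeneous case, where scalings and even dense leaves are a priori possible, I would instead invoke the hypothesis that $\mathcal{D}^\perp$ has a compact leaf $L$: its transverse value is then isolated in its $\Gamma$-orbit, which forbids the accumulation produced by a nontrivial $c_\gamma$ and again confines the monodromy to a discrete translation group, with $L$ as one fibre.

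The main obstacle is this rigidity step: converting the algebraic possibilities for the affine monodromy $(c_\gamma,b_\gamma)$ into the analytic dichotomy between homogeneous and non-homogeneous defining functions, that is, proving that any nontrivial scaling or any dense set of periods forces $f$ into the narrow self-similar class characterizing local homogeneity, while a compact leaf rules out such behavior. Once this analytic rigidity is secured, all of the topological conclusions—descent of $t$, discreteness of periods, and local triviality over $S^1$—follow formally, matching the Lo\-rentz\-i\-an template of \cite[Theorem B]{derdzinski-roter-08}.
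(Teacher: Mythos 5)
Your argument funnels everything through a single claim: that for a compact, non-lo\-cal\-ly-ho\-mo\-ge\-ne\-ous rank-one ECS manifold the affine monodromy $\,\gamma\mapsto(c_\gamma,b_\gamma)\,$ must be a lattice of translations ($c\equiv1$ with discrete period group). The justification you offer for $\,c\equiv1$ -- that a rescaling $\,c_\gamma\ne1\,$ makes $\,f$ invariant under a genuine homothety, and that ``such self-similar profiles are exactly those producing the locally homogeneous models'' -- is false. Normalizing the fixed point of $\,t\mapsto c_\gamma t+b_\gamma\,$ to $\,0$, invariance of $\,f$ under the cyclic group generated by this homothety says only that $\,f(qt)=q^{-2}f(t)\,$ for $\,q=c_\gamma$, whose general continuous solution on $\,(0,\infty)\,$ is $\,f(t)=t^{-2}H(\log t)\,$ with $\,H$ periodic of period $\,\log q$; by Theorem~\ref{lchom}, local homogeneity corresponds to the far narrower condition $\,(|\fh|^{-\nnh1/2})\hskip-1.2pt\ddot{\phantom o}\nh=0$, i.e.\ to constant $\,H$. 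So a nontrivial scaling is, at the level of your argument, perfectly compatible with non-local-homogeneity, and the dichotomy on which your whole reduction rests is unproved. This is not a removable technicality: whether compact non-lo\-cal\-ly-ho\-mo\-ge\-ne\-ous ``dilational'' examples exist is precisely the delicate issue, and the paper's proof is structured so as never to determine the monodromy at all. Instead, in the non-homogeneous case it manufactures a compact leaf with a product-like saturated neighborhood -- via the function space $\,\mathcal{F}\,$ of Lemma~\ref{rvsof}, the alternative ``$\dim\mathcal{F}<\infty\,$ forces local homogeneity through Lemma~\ref{basis} and Theorem~\ref{lchom}, while $\,\dim\mathcal{F}=\infty\,$ yields a nonzero element of the kernel of (\ref{lop})'', and then Sard's theorem -- after which the holonomy dichotomy (\ref{cpl}), proved for compact leaves through the flat connection on $\,\dla$ (Theorems~\ref{hlncl} and~\ref{eithr}), feeds into the purely foliation-theoretic fibration Theorem~\ref{bdcir}.

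There is a second gap, which also makes your setup partly circular: you assert that compactness of $\,M\,$ forces $\,t:\hm\to\bbR\,$ to be a submersion with \emph{connected} fibres, so that the levels of $\,t\,$ are single leaves of $\,\mathcal{D}^\perp$ and $\,\hm\,$ splits as a product. Connectedness of the levels is not automatic, and it cannot be obtained by flowing along the gradient: by (\ref{wnt}), $\,\nabla t=w\,$ is null and spans $\,\mathcal{D}\subseteq\mathcal{D}^\perp\nnh$, i.e.\ it is tangent to the leaves, so there is no canonical transverse flow; a priori a level of $\,t\,$ is a disjoint union of leaves, in which case the fibres of your descended map would be unions of leaves rather than leaves. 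In the paper this product decomposition is exactly Theorem~\ref{univc}, which in the non-homogeneous case is \emph{deduced from} Theorem~\ref{maith} -- so it cannot serve as an ingredient in its proof -- and in the homogeneous case follows from Lemma~\ref{gnobs}, whose proof requires the $\,\Gm$-invariant zero-free closed $1$-form $\,|f|^{1/2}dt\,$ supplied by (\ref{rnz})--(\ref{ivf}) together with an auxiliary Riemannian metric to build a complete transverse vector field. For the same reason, your treatment of the locally homogeneous case (``the transverse value of the compact leaf is isolated in its $\,\Gm$-orbit'') is a heuristic rather than an argument; the paper's route is that $\,w'=|f|^{1/2}w\,$ is a global section of $\,\mathcal{D}\,$ parallel along $\,\mathcal{D}^\perp$ by (\ref{par}), so the holonomy group of the flat line bundle $\,\dla$ is trivial and Theorem~\ref{eithr} produces the required product neighborhood of the compact leaf.
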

The assertion of Theorem~\ref{maith} obviously implies
that the leaves of $\,\mathcal{D}^\perp\nnh$ are all compact and mutually
dif\-feo\-mor\-phic. 
Note that transversal orientability of $\,\mathcal{D}^\perp\nnh$ can always be
achieved by replacing the manifold in question, if necessary, with a two-fold
isometric covering.

Theorem~\ref{maith} generalizes Theorem B of \cite{derdzinski-roter-08} from
the Lo\-rentz\-i\-an case to any indefinite metric signature. 
The assumption in \cite[Theorem B]{derdzinski-roter-08} does not include rank
one or exclude local homogeneity, since a Lo\-rentz\-i\-an ECS manifold
necessarily has rank one (see above) and cannot be locally homogeneous
(Remark~\ref{lorlh}). The Appendix explains how our proof of 
Theorem~\ref{maith} differs from that used for
\cite[Theorem B]{derdzinski-roter-08}.

The examples of \cite{derdzinski-terek-cl}, mentioned earlier, show that the
final clause of Theorem~\ref{maith} is non-vacuous, at least in odd dimensions.

Triviality of the pull\-back to $\,\bbR\,$ of a bundle over $\,S^1$ makes
the next result an obvious consequence of Theorem~\ref{maith} {\it except
when\/ $\,(\hm\nh,\hg)\,$ is locally homogeneous.}
\begin{theorem}\label{univc}The leaves of\/ $\,\mathcal{D}^\perp\hskip-2pt$ in
the pseu\-do\hs-Riem\-ann\-i\-an universal covering space\/
$\,(\hm\nh,\hg)\,$ of
any compact rank-one ECS manifold are the factor manifolds of a global product
decomposition of\/ $\,\hm\nh$. More precisely, every leaf\/ $\,L\,$ of\/
$\,\mathcal{D}^\perp\hskip-2pt$ in\/ $\,\hm\,$ is simply connected, and\/
$\,\hm\,$ is dif\-feo\-mor\-phic to\/ $\,\bbR\times L$.
\end{theorem}
We prove both theorems in Section~\ref{pt}.

{\bf Acknowledgments:} The authors greatly appreciate suggestions made by the 
anonymous referee, which allowed us to considerably improve the exposition.

{\bf Competing interests:} The authors declare none.

\renewcommand{\thetheorem}{\thesection.\arabic{theorem}}
\section{Outline of the main argument}\label{om} 
\setcounter{equation}{0}
We fix a compact rank-one ECS manifold $\,(M\nh,\g)\,$ of dimension 
$\,n\ge4$, denote by
$\,(\hm\nh,\hg)\,$ its pseu\-do\hs-Riem\-ann\-i\-an universal covering, and
by $\,\pi:\hm\to M=\hm\nnh/\hh\Gm$ the covering projection. Here 
$\,\Gm\nh\,\approx\,\pi\nh_1\w\hn M\,$ is a group of isometries of 
$\,(\hm\nh,\hg)\,$ acting on $\,\hm\,$ freely and properly
dis\-con\-tin\-u\-ous\-ly. Also, $\,\mathcal{D}\,$ stands for
the Olszak distribution 
(see the Introduction), with the same symbols $\,\mathcal{D}\,$ and
$\,\mathcal{V}\nh=\mathcal{D}^\perp$ denoting objects in $\,\hm\,$ and their
projections onto $\,M\nh$. According to (\ref{wnt}) -- (\ref{flt}), on
$\,\hm\,$ there exists a function $\,t\,$ with a parallel gradient
$\,\nabla\hn t\,$ spanning 
$\,\mathcal{D}$, so that $\,\mathcal{D}^\perp\nnh=\hs\mathrm{Ker}\,dt$, and
the Ric\-ci tensor of $\,(\hm\nh,\hg)$ equals $\,(2-n)\fh(t)\,dt\otimes dt$,
where $\,\fh:\hm\to\bbR\,$ is locally a function of $\,t$. If a $\,C^1$
function $\,\chi:\hm\to\bbR\,$ is locally a function of $\,t$, one may define
its $\,t$-de\-riv\-a\-tive $\,\dot\chi:\hm\to\bbR\,$ by
$\,d\chi=\dot\chi\nnh\,dt$. It easily follows -- cf.\ (\ref{fcg}) -- that
\begin{equation}\label{prj}
\begin{array}{l}
\mathrm{the\ action\ of\ }\,\Gm\hs\mathrm{\ multiplies\ 
}\,\nabla\hn t\,\mathrm{\
by\ nonzero\ constants,}\\
\mathrm{implying\ }\,\Gm\nh\hyp\mathrm{in\-var\-i\-ance\ of\ both\ 
}\,|f|^{1/2}dt\,\mathrm{\ and\ }\,|\dot f|^{1/3}dt\hn.
\end{array}
\end{equation}
We now assume transversal orientability of $\,\mathcal{V}\nh$, and proceed to
summarize the steps leading to the main conclusion of Theorem~\ref{maith}: 
that, unless $\,\g\,$ is locally homogeneous, 
$\,\mathcal{V}\nh=\mathcal{D}^\perp$ must be the vertical 
distribution of a fibration $\,M\nh\to S^1\nh$.

This is achieved by showing that $\,\mathcal{V}\nh$, in addition to being a 
trans\-ver\-sal\-ly-o\-ri\-ent\-able co\-di\-men\-sion-one foliation on the 
compact manifold $\,M\nh$, also has what we call property (\ref{cpl}): {\it
for every compact leaf\/ $\,L\,$ of\/ $\,\mathcal{V}\nh$, the nearby leaves 
are either all noncompact -- with the exception of\/ $\,L\,$ -- or they are
all compact and there exists a prod\-uct-like\/
$\,\mathcal{V}\nh$-sat\-u\-rat\-ed tubular neighborhood of\/ $\,L\,$ in\/}
$\,M\nh$. Furthermore, {\it some compact leaf of\/ $\,\mathcal{V}\hs$ then
realizes the second case in the ei\-ther-or clause of\/} (\ref{cpl}).

The reason why the main claim in Theorem~\ref{maith} follows from the two
conditions italicized above is that, even within the general context of
foliations, with no reference to ECS geometry, these conditions imply that all
leaves of $\,\mathcal{V}\hs$ are compact, which forces $\,M\,$ to be a bundle
over the circle (Theorem~\ref{bdcir}).

Returning to our ECS case, we prove property (\ref{cpl}) for
$\,\mathcal{V}\nh=\mathcal{D}^\perp\nnh$ by first observing, in
Section~\ref{hc},
that the rank-one Ol\-szak distribution $\,\mathcal{D}\,$ on
$\,\hm$ is spanned by the parallel gradient $\,\nabla\hn t$, and so the 
Le\-vi-Ci\-vi\-ta connection of the compact ECS manifold $\,(M\nh,\g)\,$
induces flat linear connections both in $\,\mathcal{D}\,$ (over $\,M$) and in
the line bundle $\,\dla$ over any leaf $\,L\,$ of $\,\mathcal{D}^\perp\nnh$,
dual to the line bundle $\,\mathcal{D}\hskip-2pt_L\w$ arising as the
restriction of $\,\mathcal{D}\,$ to $\,L$. At the same time,
$\,\dla$ is canonically isomorphic to the normal bundle of $\,L\,$ in
$\,M\nh$. Assuming compactness of $\,L\,$ we then show, in
Theorem~\ref{hlncl} (see the next paragraph) that, under a suitable
dif\-feo\-mor\-phic identification $\,\varPsi\,$ 
of a neighborhood $\,\,U\hs$ of $\,L\,$ in $\,M\,$ with a neighborhood 
$\,\,U'$ of the zero section $\,L\,$ in the line bundle $\,\dla$, the
distribution $\,\mathcal{D}^\perp$ on $\,\,U\hs$ corresponds to the
restriction to $\,\,U'$ of the horizontal distribution of the flat linear
connection in $\,\dla$, mentioned above. By (\ref{prj}), the holonomy group
$\,H\hskip-2.5pt_L\w$ of the latter connection consists of multiplications by
positive real constants (`nonzero' in (\ref{prj}) becoming `positive' due to 
transversal orientability of $\,\mathcal{V}\nh=\mathcal{D}^\perp\nnh$), and the
dichotomy required in (\ref{cpl}) comes from the obvious fact that
$\,H\hskip-2.5pt_L\w$ is either infinite, or trivial.

The identification $\,\varPsi:U\to U'\nh$, given by formula (\ref{ptx}), uses
a fixed smooth vector field on $\,M\nh$, nowhere tangent to
$\,\mathcal{D}^\perp\nnh$, to provide the curve segments forming the fibres
of the tubular neighborhood $\,\,U\nh$, and along these segments, pulled
back to $\,\hm\nh$, we define $\,\varPsi\,$ so that it sends local
$\,t$-levels to local sections parallel relative to the flat linear
connection. Due to (\ref{prj}), this construction is
$\,\Gm\nh$-equi\-var\-i\-ant, and hence projects into $\,M\nh$.

Finally, still in the ECS case, we establish the second option of property 
(\ref{cpl}) for some compact leaf of $\,\mathcal{V}\nh=\mathcal{D}^\perp$ by 
considering, in Section~\ref{fs}, the vector space $\,\mathcal{F}$ of all 
continuous functions 
$\,\chi:\hm\to\bbR\,$ such that the $\,1$-form $\,\chi\nnh\,dt\,$ is closed
(i.e., locally exact) and pro\-ject\-a\-ble onto $\,M\nh$, with the linear
operator $\,P:\mathcal{F}\to H^1\nh(M\nh,\hs\bbR)$ sending $\,\chi\,$
to the co\-ho\-mol\-o\-gy class of the projected $\,1$-form on $\,M\nh$.

First, let $\,\dim\mathcal{F}\nh=m<\infty$. By (\ref{prj}),
$\,|f|^{1/2},|\dot f|^{1/3}\nh\in\mathcal{F}\,$ and $\,\mathcal{F}\,$ is 
closed under the $\,m$-ar\-gu\-ment operation assigning
$\,|\psi_1\w\dots\psi_m\w|^{1/m}$ to $\,\psi_1\w,\dots,\psi_m\w$. Simple 
set-the\-o\-ret\-i\-cal reasons then cause $\,|\dot f|^{1/3}$ to be a constant
multiple of $\,|f|^{1/2}$ (see the proof of Theorem~\ref{admts}). This makes
$\,f\,$ globally a function of $\,t$, of the form
$\,f\nh=\ve\hh(t-b)^{-\nh2}$ with real constants $\,\ve\ne0\,$ and $\,b\,$
which, combined with a result from algebraic geometry -- Whitney's theorem
-- implies local homogeneity of $\,\g$ (a precise cross-ref\-er\-ence being: 
Lemma~\ref{babqa} invoked in the proof of Theorem~\ref{lchom}).

On the other hand, when $\,\mathcal{F}\,$ is in\-fi\-nite-di\-men\-sion\-al,
$\,P\,$ must be noninjective due to compactness of $\,M\,$ and, given any
$\,\chi\in\mathcal{F}\smallsetminus\{0\}\,$ with $\,P\chi=0$, we see that 
$\,\chi\nnh\,dt$ projects onto an exact $\,1$-form on $\,M\nh$, and hence
onto $\,d\mu\,$ for some (nonconstant) $\,C^1$ function $\,\mu:M\to\bbR$. As 
$\,\mathcal{D}^\perp\nnh=\hs\mathrm{Ker}\,dt\,$ on $\,\hm\nh$, this $\,\mu\,$
is constant along $\,\mathcal{D}^\perp\nnh$.

Even though $\,\mu\,$ is only guaranteed to be of class $\,C^1\nh$, Sard's
theorem still applies (Remark~\ref{sards}), and any connected component
$\,L\,$ of a regular level of $\,\mu$ clearly realizes the second case of
(\ref{cpl}).

\section{Preliminaries}\label{pr} 
\setcounter{equation}{0}
Manifolds are (usually) connected, pseu\-\hbox{do\hskip.7pt-}Riem\-ann\-i\-an
metrics and vector fields are assumed
$\,C^\infty\nnh$-dif\-fer\-en\-ti\-a\-ble, while
functions may be of lower regularity. The terms `\hs foliation' and 
(in\-te\-gra\-ble) `distribution' will be used interchangeably; by their
`leaves' we always mean {\it maximal connected integral manifolds}.

The following four facts will be used in Sections~\ref{co},~\ref{lh} 
and~\ref{es}.
\begin{remark}\label{flmap}Let points $\,x,x\hh'$ of co\-di\-men\-sion-one 
sub\-man\-i\-folds $\,L,L\nnh'$ of a manifold $\,M\,$ be joined by an integral 
curve $\,\cx$ of a complete $\,C^\infty\nnh$ vector field $\,v\,$ on
$\,M$ which is transverse to $\,L\,$ at $\,x\,$ and to $\,L\nnh'$ at
$\,x\hh'\nh$. Then $\,\cx$ belongs to a smooth variation
$\,\,\widetilde U\nh\ni y\mapsto\cy$ of integral curves of $\,v$,
pa\-ram\-e\-trized by a neighborhood $\,\,\widetilde U$ of $\,x\,$ in
$\,M\,$ such that, for some neighborhoods $\,\,U\nh,U'$ of $\,x\,$ in $\,L\,$ 
and $\,x\hh'$ in $\,L\nnh'$ with $\,\,U\subseteq\widetilde U\nh$, each
$\,\cy$ joins $\,y\in\widetilde U\hs$ to a point $\,y'\nh\in U'$ and
the resulting mapping
$\,y\mapsto y'$ is a sub\-mer\-sion $\,\widetilde U\nh\to U'\nh$, while its
restriction to $\,U\hs$ is a dif\-feo\-mor\-phism $\,\,U\nh\to U'\nh$. The 
word `smooth' also applies here to the domain intervals of the integral curves.

Namely, let $\,\bbR\times M\ni(\taw,y)\mapsto\phi\hs(\taw,y)\in M\,$ be the
flow of $\,v$, so that $\,\cx$ is pa\-ram\-e\-trized by
$\,[\hs0,\taw\hn_*\w]\ni\taw\mapsto\phi\hs(\taw,x)$. We now fix a neighborhood
$\,\,\widetilde U'$ of $\,x\hh'$ in $\,M\,$ and a $\,C^\infty\nnh$ function
$\,\theta:\widetilde U'\nh\to\bbR\,$ with
$\,L\nnh'\nh\cap\hs\widetilde U'\nh=\theta\hh^{-\nnh1}(0)$, having $\,0\,$ as
a regular value. The equation $\,\theta(\phi\hs(\taw,y))=0$, imposed on
$\,(\taw,y)\in\bbR\times\widetilde U\nh$,  
is satisfied when $\,(\taw,y)=(\taw\hn_*\w,x)$. 
The implicit function theorem \cite[p.\ 18]{lang} applied to this equation 
yields a $\,C^\infty\nnh$ function $\,y\mapsto\taw(y)$, defined near 
$\,x\,$ in $\,M\nh$, with $\,\theta(\phi\hs(\taw(y),y))=0$ and 
$\,\taw(x)=\taw\hn_*\w$. Consequently, $\,y'\nh=\phi\hs(\taw(y),y)\,$ lies
in $\,L\nnh'$. The sub\-mer\-sion/dif\-feo\-mor\-phism property of
the mapping $\,\widetilde U\nh\to U'$ or $\,\,U\nh\to U'\nh$ arising in this
way, with $\,\,U\nh=L\cap\hs\widetilde U\nh$, is immediate since
$\,\widetilde U\nh\to U'$ has a right inverse $\,\,U'\nh\to U\hs$ obtained by
using the same principle for $\,-v\,$ instead of $\,v$.
\end{remark}
\begin{remark}\label{flows}For a (possibly disconnected) 
co\-di\-men\-sion-one sub\-man\-i\-fold
$\,L$ of a manifold $\,M\,$ and the flow
$\,\bbR\times M\ni(\taw,x)\mapsto\phi\hs(\taw,x)\in M\,$ of a complete
$\,C^\infty\nnh$ vector field on $\,M\,$ which is nowhere tangent to $\,L$,
the restriction \hbox{$\,\phi:\bbR\times L\to M$} is locally
dif\-feo\-mor\-phic (a co\-di\-men\-sion-zero immersion). If, in addition, 
$\,L\,$ is also compact,
$\,\phi:(-\ve,\ve)\times L\to M$ is an embedding for all $\,\ve>0\,$ close
to $\,0$.

In fact, the first claim follows, since $\,\phi:\bbR\times L\to M\,$ has smooth
local inverses as an easy consequence of Remark~\ref{flmap}. 
Next, if the embedding assertion 
failed, there would exist two term\-wise distinct sequences in
$\,\bbR\times L\,$ having the $\,\bbR\,$ components tending to $\,0$,
with the same sequence of $\,\phi\hskip1.2pt$-\hn val\-ues. Since $\,L\,$ is
now compact,
so are $\,\phi\hs([-\ve,\ve]\times L)\,$ for all $\,\ve>0$, and hence
both sequences have sub\-se\-quences converging to the same limit $\,x\in L$. 
Injectivity of our mapping on a neighborhood of $\,(0,x)$ now leads to a
contradiction.
\end{remark}
By an $\,m$-ar\-gu\-ment operation $\,\varPi\,$ in the following lemma we
mean any mapping associating, with any ordered $\,m$-tuple 
$\,(\psi_1\w,\dots,\psi_m\w)\,$ of functions 
$\,X\to\bbR$, a function $\,\varPi(\psi_1\w,\dots,\psi_m\w):X\to\bbR$. 
\begin{lemma}\label{basis}Let a vector space\/ $\,\mathcal{F}$ of functions\/
$\,X\to\bbR\,$ on a set $\,X$ have a positive finite dimension
$\,m$. If\/ $\,X$ is closed both under the ab\-so\-lute-val\-ue operation\/
$\,\psi\mapsto|\psi|\,$ and under some\/ $\,m$-ar\-gu\-ment operation\/
$\,\varPi$ sending any\/ $\,\psi_1\w,\dots,\psi_m\w$
to a nonnegative function\/
$\,\varPi(\psi_1\w,\dots,\psi_m\w)$ which has the same 
zeros as the product $\,\psi_1\w\nh\ldots\hs\psi_m\w$,
then there exist a proper subset\/ $\,X\nh_0\w$ of\/ $\,X\nnh$, a
partition\/ $\,\{X\hskip-2pt_j\w\}_{j=1}^m$ of\/
$\,X\nh\smallsetminus X\nh_0\w$ and a basis\/
$\,\chi_1\w,\dots,\chi_m\w$ of\/ $\,\mathcal{F}\,$ such
that\/ $\,\chi\nnh_j\w>0\,$ on\/
$\,X\hskip-2pt_j\w$ and\/ $\,\chi\nnh_j\w=0\,$ on\/
$\,X\nh\smallsetminus X\hskip-2pt_j\w$ for
every\/ $\,j=1,\dots,m$. In other words, some basis\/ 
$\,\chi_1\w,\dots,\chi_m\w$ of\/ $\,\mathcal{F}\,$ consists of nonnegative
functions with pairwise disjoint supports, the word `support\hn' meaning here
the complement of the zero set.  
\end{lemma}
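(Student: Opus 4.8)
The plan is to exploit the fact that closure under $\psi\mapsto|\psi|$ turns $\mathcal{F}$ into a finite-dimensional \emph{vector lattice}: since $\max(\psi,\phi)=(\psi+\phi+|\psi-\phi|)/2$ and $\min(\psi,\phi)=(\psi+\phi-|\psi-\phi|)/2$, and in particular $\psi^+=\max(\psi,0)$ and $\psi^-=\max(-\psi,0)$, all these pointwise lattice operations keep us inside $\mathcal{F}$. I would then let $X_0$ be the common zero set $\{x\in X:\psi(x)=0\text{ for all }\psi\in\mathcal{F}\}$; as $\dim\mathcal{F}=m\ge1$, this is a proper subset. Fixing a basis $\psi_1,\dots,\psi_m$ and putting $u=|\psi_1|+\dots+|\psi_m|\in\mathcal{F}$, one has $u\ge0$ and $u(x)=0$ exactly on $X_0$, so $u$ is a nonnegative element whose support is all of $S:=X\setminus X_0$.

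Next I would manufacture characteristic-type elements. For each $\phi\in\mathcal{F}$ the truncations $\min(u,n\phi^+)$ lie in $\mathcal{F}$ and converge pointwise, as $n\to\infty$, to $u\,\mathbf{1}_{\{\phi>0\}}$; since a finite-dimensional subspace of $\mathbb{R}^X$ is closed under pointwise limits (the values at $m$ suitably chosen points determine the coefficients), this limit again lies in $\mathcal{F}$. Thus the family $\mathcal{B}$ of all $B\subseteq S$ with $u\,\mathbf{1}_B\in\mathcal{F}$ is a Boolean algebra of subsets of $S$, closed under intersection, union and complement via $\min$, $\max$ and $u-(\,\cdot\,)$ on the corresponding components, and it contains $\{\phi>0\}\cap S$, $\{\phi<0\}\cap S$ and $\{\phi=0\}\cap S$ for every $\phi$. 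Because components $u\,\mathbf{1}_B$ attached to pairwise disjoint $B$'s are linearly independent in $\mathcal{F}$, the algebra $\mathcal{B}$ must be finite; I would take its atoms $B_1,\dots,B_r$, which partition $S$, and set $\chi_j=u\,\mathbf{1}_{B_j}\in\mathcal{F}$.

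Then I would check that these $\chi_j$ form the required basis, the crucial point being that \emph{on each atom $B_j$ every $\phi\in\mathcal{F}$ is a constant multiple of $u$}. Indeed, were $\phi(x)/u(x)\ne\phi(y)/u(y)$ for two points $x,y\in B_j$ (both denominators positive, as $B_j\subseteq S$), then $g=\phi-t\,u$ with $t=\phi(x)/u(x)$ would lie in $\mathcal{F}$, vanish at $x$ and not at $y$, so $\{g\ne0\}\cap B_j\in\mathcal{B}$ would be a nonempty proper subset of the atom $B_j$, which is impossible. Hence every $\phi$ equals $\sum_j c_j\chi_j$ on $S$ and vanishes on $X_0$, so the $\chi_j$ span $\mathcal{F}$; being disjointly supported they are independent, whence $r=m$. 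Taking $X_j=B_j$ then yields the partition $\{X_j\}_{j=1}^m$ of $X\setminus X_0$ together with a basis $\chi_1,\dots,\chi_m$ satisfying $\chi_j>0$ on $X_j$ and $\chi_j=0$ off $X_j$, as desired.

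The main obstacle is the passage from mere lattice closure to the existence of genuine component (indicator-type) elements $u\,\mathbf{1}_B\in\mathcal{F}$; lattice operations only recombine elements already present, so extracting the atoms needs something more. In the plan above this is supplied by closedness of the finite-dimensional space $\mathcal{F}$ under pointwise limits. The stated hypothesis on $\varPi$ fits here as well: since $\varPi(\psi_1,\dots,\psi_m)$ is nonnegative with support $\bigcap_i S_{\psi_i}$, it realizes intersections of supports directly inside $\mathcal{F}$, and together with the sums $|\psi_1|+\dots+|\psi_m|$, whose support is $\bigcup_i S_{\psi_i}$, it exhibits the support family as closed under the set operations underlying $\mathcal{B}$. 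Once the atoms $B_1,\dots,B_r$ are in hand, the finiteness $r=m$ and the one-dimensionality of $\mathcal{F}$ over each atom finish the argument routinely.
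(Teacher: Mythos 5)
Your proof is correct, but it takes a genuinely different route from the paper's. The paper's argument is a short, purely algebraic construction that leans essentially on the hypothesis about $\varPi$: it picks points $x_1,\dots,x_m$ at which the evaluation functionals form a basis of $\mathcal{F}^*$, chooses $\sigma_1,\dots,\sigma_m\in\mathcal{F}$ with $\sigma_j(x_j)>0>\sigma_j(x_i)$ for $i\ne j$, and sets $\chi_j=\varPi(\widetilde\sigma_1,\dots,\widetilde\sigma_m)$ with $\widetilde\sigma_j=\sigma_j^+$ and $\widetilde\sigma_i=\sigma_i^-$ for $i\ne j$; the zero-set hypothesis on $\varPi$ then makes the sets $X_j=\{\chi_j>0\}$ nonempty (each contains $x_j$) and pairwise disjoint, so the $\chi_j$ are linearly independent, hence a basis, and $X_0$ is their common zero set. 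There $\varPi$ is precisely the device for intersecting supports while staying inside $\mathcal{F}$, since products of elements of $\mathcal{F}$ need not lie in $\mathcal{F}$ (in the paper's application $\varPi$ is a geometric mean of absolute values). You never use $\varPi$ at all: you exploit only the lattice structure coming from $\psi\mapsto|\psi|$ together with the observation that a finite-dimensional subspace of $\mathbb{R}^X$ is closed under pointwise sequential limits, which lets you manufacture the components $u\,\mathbf{1}_{\{\phi>0\}}$, form the Boolean algebra $\mathcal{B}$, and identify $\mathcal{F}$ with the span of the atoms' components. So you in fact prove a strictly stronger statement -- the $\varPi$ hypothesis is redundant -- which is essentially the classical fact that a finite-dimensional function lattice admits a basis of disjointly supported nonnegative elements; the price is a longer argument. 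The one step you should write out in full is the passage from ``disjoint families in $\mathcal{B}$ have at most $m$ members'' to the existence of atoms partitioning $S$: the cleanest way is to bypass finiteness of all of $\mathcal{B}$ and instead take a family of pairwise disjoint nonempty elements of $\mathcal{B}$ of maximal cardinality $r\le m$ (it exists by your independence bound); maximality forces each member to be an atom of $\mathcal{B}$ and forces their union to be $S$, after which your one-dimensionality argument over each atom finishes the proof exactly as you wrote.
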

\begin{proof}Choose $\,x_1\w,\dots,x_m\w\in X\,$ with the $\,m\,$ evaluations 
$\,\delta_{x\nnh_j}$ forming  a basis of the dual space 
$\,\mathcal{F}^*\nh$, so that some
$\,\sigma\nnh_1\w,\dots,\sigma\nnh_m\w\in\mathcal{F}\,$ have 
$\,\sigma\hskip-2pt_j\w(x\nh_i\w)<0<\sigma\hskip-2pt_j\w(x\nnh_j\w)$ whenever
$\,i\ne j$. 
The pos\-i\-tive-part and neg\-a\-tive-part operations 
$\,(\hskip2.3pt)^\pm:\mathcal{F}\to\mathcal{F}$ are given by 
$\,\sigma^\pm\hskip-.7pt=(|\hh\sigma\nh|\pm\sigma\nh)/2$. We set 
$\,\chi\nnh_j\w=
\varPi(\widetilde\sigma\hskip-2pt_1\w,\dots,\widetilde\sigma\hskip-2pt_m\w)$, where 
$\,\widetilde\sigma\hskip-2pt_j\w=\sigma\hskip-2pt_j^+$ and 
$\,\widetilde\sigma\nnh_i\w=\sigma\nnh_i^-$ for $\,i\ne j$. The
supports 
$\,X\hskip-2pt_j\w=\chi\nnh_j^{-\nnh1}((0,\infty))\,$ are nonempty (as
$\,x\nnh_j\w\in X\hskip-2pt_j\w$), and pairwise disjoint (since, if
$\,i\ne j$, one has $\,\sigma\nnh_i\w<0<\sigma\hskip-2pt_j\w$ on 
$\,X\hskip-2pt_j\w$), which trivially implies linear independence of
$\,\chi_1\w,\dots,\chi_m\w$. Our claim thus follows
if we declare $\,X\nh_0\w$ to be
$\,X\nnh\smallsetminus\bigcup_{j=1}^mX\hskip-2pt_j\w$, that is, the
simultaneous zero set of $\,\chi_1\w,\dots,\chi_m\w$.
\end{proof}
\begin{remark}\label{lcdif}A lo\-cal\-ly-dif\-feo\-mor\-phic $\,C^\infty\nnh$
mapping from a compact manifold into a connected one is necessarily
surjective, and so the latter manifold must be compact as well. 
In fact, the 
image of the mapping is both compact and open. (We do not need the well-known 
stronger conclusion that the mapping is then also a covering projection.)
\end{remark}
\begin{remark}\label{ccomp}A nonempty proper sub\-set $\,K\, $ of
$\,(0,\infty)$, different from $\,\{1\}$, and closed under the mappings
$\,q\mapsto q^r$ for all $\,r\in\bbZ$, must have infinitely many connected 
components: if $\,q\in(1,\infty)\smallsetminus K\,$ is fixed, choosing a
connected component $\,K\nh_r\w$ of each nonempty intersection
$\,K\cap(q^r\nh,q^{r+1})$, $\,r\in\bbZ$, we obtain, due to
unboundedness of $\,K\nnh$, an infinite family of such components
$\,K\nh_r\w$.
\end{remark}

\section{Co\-di\-men\-sion-one foliations}\label{co}
\setcounter{equation}{0}
The results of this section are the most crucial steps in proving
Theorem~\ref{maith}.

Here is a property of a co\-di\-men\-sion-one foliation 
$\,\mathcal{V}\hs$ on a manifold $\,M$:
\begin{equation}\label{cpl}
\begin{array}{l}
\mathrm{every\nh\ compact\nh\ leaf\nh\ }\,\hs L\,\mathrm{\nh\ of\nh\
}\,\mathcal{V}\,\mathrm{\nh\ has\nh\ a\nh\ neighborhood\nh\
}\,\hs U\mathrm{\nh\ in\nh\ }\,M\mathrm{\nh\ such\nh\ that}\\
\mathrm{the\hh\ leaves\hh\ of\hh\ }\hs\mathcal{V}\hs\mathrm{\hh\
intersecting\hh\ }\hs\,U\nnh\smallsetminus\hh L\,\mathrm{\hh\ are\hh\
either\hh\
all\hh\ noncompact,\nnh\ or}\\
\mathrm{they\hn\ are\hn\ all\hn\ compact\hn\ and\hn\ some\hn\ neighborhood\hn\
of\hn\ }\hs L\mathrm{\hh\ in\hn\ }\hs M\mathrm{\hn\ is\hn\ a\hn\ union}\\
\mathrm{of\hs\ compact\hs\ leaves\hs\ of\hs\ }\,\,\mathcal{V}\,\hs\mathrm{\
and\hs\ may\hs\ be\hs\ dif\-feo\-mor\-phic\-al\-ly\hs\ identified}\\
\mathrm{with\hs\ }\,\hs\bbR\nh\times L\,\,\mathrm{\ so\hs\ as\hs\ to\hs\ make\
}\,\,\mathcal{V}\,\hs\mathrm{\hs\ appear\hs\ as\hs\ the\hs\
}\,\,L\,\,\mathrm{\hs\ factor\hs\ foliation.}
\end{array}
\end{equation}
The final clause of (\ref{cpl}) means that some dif\-feo\-mor\-phism of 
a neighborhood of $\,L$ in $\,M\,$ onto $\,\bbR\nh\times L\,$ pushes
$\,\mathcal{V}\,$ forward onto 
the $\,L\,$ {\it factor foliation\/} of $\,\bbR\nh\times L$, which has the
leaves $\,\{\taw\}\times L$, for $\,\taw\in\bbR$.
\begin{theorem}\label{bdcir}Let a trans\-ver\-sal\-ly-o\-ri\-ent\-able
co\-di\-men\-sion-one foliation\/ $\,\mathcal{V}\hs$ on a compact manifold\/
$\,M\hs$ satisfy condition\/ {\rm(\ref{cpl})}. If, in addition, some compact 
leaf\/ $\,L\,$ of $\,\mathcal{V}\hs$ realizes the second possibility in\/
{\rm(\ref{cpl})}, so as to have a prod\-uct-like\/
$\,\mathcal{V}\nh$-sat\-u\-rat\-ed neighborhood in $\,M\hs$ formed by compact 
leaves, then the leaves of\/ $\,\mathcal{V}\hs$ are all compact and they
constitute the fibres of a bundle projection\/ $\,M\nh\to S^1\nnh$.
\end{theorem}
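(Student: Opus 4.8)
The plan is to establish the theorem by an open--closed (connectedness) argument for the set of ``good'' leaves, followed by an Ehres\-mann-type recognition of the projection. Throughout I would use that transversal orientability, combined with compactness of $\,M$, trivialises the normal line bundle of $\,\mathcal{V}$ and hence supplies a complete $\,C^\infty$ vector field $\,v\,$ on $\,M\,$ transverse to $\,\mathcal{V}\,$ everywhere, with flow $\,\phi$; this makes the flow machinery of Remarks~\ref{flmap} and~\ref{flows} available. I would then set $\,\Omega\,$ to be the union of those compact leaves that realise the second possibility in~(\ref{cpl}), that is, those carrying a $\,\mathcal{V}$-sat\-u\-rat\-ed neighborhood $\,\cong\bbR\times L\,$ consisting of compact leaves. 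By hypothesis $\,\Omega\neq\emptyset$, and $\,\Omega\,$ is open and $\,\mathcal{V}$-saturated: the product neighborhood of any such leaf consists of compact leaves each again of the second type. The goal is to prove $\,\Omega=M$, so that every leaf is compact with trivial holonomy.

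The heart of the matter is showing $\,\Omega\,$ closed. Let $\,x\in\overline\Omega\,$ with leaf $\,L_x$. Condition~(\ref{cpl}) immediately disposes of two cases: $\,L_x$ cannot be a \emph{first}-possibility compact leaf, since a punctured neighborhood of such a leaf carries only noncompact leaves, whereas $\,x\in\overline\Omega\,$ forces nearby points on compact leaves distinct from $\,L_x$; and if $\,L_x$ is a second-possibility compact leaf, then $\,x\in\Omega\,$ already. Thus it remains only to exclude that $\,L_x$ is noncompact, i.e.\ that product-type compact leaves accumulate onto a noncompact leaf. To attack this I would pass to a connected component $\,\Omega_0$ of $\,\Omega$, patch its local product structures into a trivialisation $\,\Omega_0\cong(\alpha,\beta)\times L\,$ with the compact leaves as slices, and push a transverse trajectory $\,t\mapsto\phi(t,p_0)\,$ from the interior toward the boundary; completeness of $\,v\,$ forces convergence to a point $\,x^\ast\in\partial\Omega_0$ as the parameter reaches a finite value, with the interior compact leaves passing through points tending to $\,x^\ast$. \emph{The main obstacle is precisely to prove that the boundary leaf $\,L_{x^\ast}$ is compact}, equivalently to rule out a holonomy/volume blow-up of the slices as $\,t\to\beta$. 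I expect to handle this by a bounded-geometry limit: the leaves all have tangent spaces equal to the fixed smooth distribution $\,T\mathcal{V}\,$ on the compact manifold $\,M$, hence uniformly bounded second fundamental form, so that a subsequence of the compact leaves $\,K_t\cong L\,$ through the points converging to $\,x^\ast$ converges to a leaf through $\,x^\ast$, which one then argues is compact; the delicate point, and the one I would expect to require real work (e.g.\ a uniform lower bound on the widths of the embedded flow tubes over $\,K_t$ provided by Remark~\ref{flows}), is exactly the control preventing the limit from escaping to a noncompact leaf. Once $\,L_{x^\ast}$ is known compact, condition~(\ref{cpl}) applies to it, and since its $\,\Omega_0$-side neighbors are compact it must be of the second type, so $\,x^\ast\in\Omega$, contradicting $\,x^\ast\in\partial\Omega_0$. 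Hence $\,\Omega\,$ is closed, and by connectedness $\,\Omega=M$.

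With $\,\Omega=M\,$ every leaf is compact and carries a $\,\mathcal{V}$-sat\-u\-rat\-ed product neighborhood, so the holonomy is trivial everywhere and the leaf space $\,Q=M/\mathcal{V}\,$ is a $\,1$-manifold: the product neighborhoods furnish charts $\,\cong\bbR$, and Hausdorffness follows by separating distinct compact leaves with disjoint saturated product neighborhoods (using compactness of leaves together with the embedding statement of Remark~\ref{flows}). The quotient map $\,\pi:M\to Q\,$ is open and continuous, whence $\,Q\,$ is compact and connected; being a compact connected boundaryless $\,1$-manifold (two-sid\-ed\-ness coming from transversal orientability), $\,Q\cong S^1\nnh$. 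Finally $\,\pi:M\to S^1\,$ is a proper submersion with the leaves as fibres, hence a locally trivial fibre bundle; equivalently, $\,M\,$ is realised as the mapping torus of the first-return diffeomorphism of the transverse flow $\,\phi\,$ on a fixed leaf $\,L$, this first return being well defined by trivial holonomy via Remarks~\ref{flmap} and~\ref{flows}, and its sweeping out all of $\,M\,$ following from Remark~\ref{lcdif}. This produces the asserted bundle projection $\,M\to S^1\,$ whose fibres are exactly the leaves of $\,\mathcal{V}$.
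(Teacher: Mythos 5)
Your overall skeleton (an open--closed argument on the union $\,\Omega\,$ of product-type compact leaves, followed by identifying the leaf space with $\,S^1$) is reasonable, and your final step is essentially fine, but the proof has a genuine gap exactly where you flag it: you never prove that the boundary leaf $\,L_{x^\ast}$ is compact, i.e., you never exclude that compact product-type leaves accumulate onto a noncompact leaf, and this is the entire content of the theorem. The bounded-geometry idea cannot close the gap as stated: since all leaves are integral manifolds of the fixed smooth distribution $\,T\mathcal{V}\,$ on a compact manifold, uniform bounds on their second fundamental forms are automatic and yield only local (graphical) convergence of leaves; they place no bound whatsoever on the intrinsic diameter or volume of the compact leaves through points tending to $\,x^\ast$, and it is precisely the blow-up of diameter/volume -- the leaves wrapping around more and more before closing up -- that produces a noncompact limit leaf. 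Likewise, the widths of the embedded flow tubes of Remark~\ref{flows} degenerate exactly in that scenario, so a ``uniform lower bound'' on them is equivalent to what you are trying to prove, not a tool for proving it.

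The paper fills this hole by a different mechanism, and it is worth seeing why it works. Fixing $\,z\in L\,$ and the maximal interval $\,(a,b)\,$ of flow parameters $\,\taw\,$ for which $\,L_{\phi(\taw,z)}\,$ is compact, one assumes, to the contrary, that these leaves are mutually distinct, as in (\ref{mtd}). That assumption forces each flow line to meet each such leaf exactly once, as in (\ref{sgp}), and it is this injectivity -- not any geometric bound -- which guarantees that the reparametrization $\,\sigma(\taw,x)\,$ has a \emph{finite} limit as $\,\taw\to b\,$ (otherwise a flow line would accumulate and cross a single leaf infinitely often), that the limit points $\,\lambda(b,x)\,$ all lie on one leaf, and that $\,x\mapsto\lambda(b,x)\,$ is a locally diffeomorphic map from the compact $\,L\,$ onto that leaf; Remark~\ref{lcdif} then makes the boundary leaf compact, so the second option of (\ref{cpl}) applies to it and contradicts maximality of $\,(a,b)$. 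Note that the outcome of this contradiction is \emph{not} that your $\,\Omega\,$ is closed; it is the ``wrap-around'' conclusion (\ref{ecd}), namely $\,L_{\phi(c,z)}=L_{\phi(d,z)}\,$ for some $\,a<c<d<b$. The saturation of $\,[\hh c,d\hs]\,$ then maps onto a subset of $\,M\,$ which is both compact and open, hence equal to $\,M\nh$, proving all leaves compact in one stroke. In other words, the paper sidesteps exactly the closedness statement your scheme requires; to rescue your argument you would need to import the mutual-distinctness dichotomy and the injectivity it provides, at which point you would have reproduced the paper's proof rather than given an alternative to it.
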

\begin{proof}Transversal orientability of $\,\mathcal{V}\hs$ allows us to fix
a $\,C^\infty\nnh$ vector field $\,v$ on $\,M\nh$, nowhere tangent to
$\,\mathcal{V}\nh$. We also fix a compact leaf $\,L\,$ of $\,\mathcal{V}\hs$
satisfying the second option in the ei\-ther-or clause of (\ref{cpl}). With 
$\,\bbR\times M\ni(\taw,x)\mapsto\phi\hs(\taw,x)\in M$ denoting the flow
of $\,v$, and $\,z\,$ a given point of $\,L$, the ``second option'' guarantees
the existence of an open interval $\,(a'\nh,b')\,$ containing $\,0\,$ such
that, for all $\,\taw\in(a'\nh,b')$, the leaf $\,L\nh_{\phi(\taw,z)}\w$ of
$\,\mathcal{V}\hs$ passing through $\,\phi\hs(\taw,z)\,$ is compact. Let
$\,(a,b)\,$ be the maximal open interval with this property (that is, the
union of all such intervals).

All $\,L\nh_{\phi(\taw,z)}\w$ with $\,\taw\in(a,b)\,$ satisfy
the second option in (\ref{cpl}), as their compactness obviously precludes the
first one. The resulting product structure of a neighborhood of each 
$\,L\nh_{\phi(\taw,z)}\w$ has three immediate consequences. First, the set 
$\,E=\{(\taw,y)\in(a,b)\times M:y\in L\hn_{\phi(\taw,z)}\w\}\,$ is open in
$\,(a,b)\times M\nh$. Secondly, the mapping 
$\,E\ni(\taw,y)\mapsto\taw\in(a,b)\,$ constitutes a bundle projection; and, 
finally,
\begin{equation}\label{lcd}
\mathrm{the\ mapping\ }\,E\ni(\taw,y)\mapsto y\in M\,\mathrm{\ is\
lo\-cal\-ly\ dif\-feo\-mor\-phic.}
\end{equation}
The pull\-back of $\,v\,$ under this last mapping is a vector field on the 
total space $\,E\,$ of the bundle in (b), transverse to the fibres, so that 
it spans a nonlinear connection (horizontal distribution) in $\,E$.

For any fixed $\,x\in L$, let $\,(a,b)\ni\taw\mapsto(\taw,\lambda(\taw,x))
\in E\,$ be the horizontal lift, relative to this connection, of the curve 
$\,\taw\mapsto\taw\,$ in the base manifold $\,(a,b)$, with the initial value 
$\,(0,x)\,$ at $\,\taw=0$. Such a horizontal lift clearly exists on some 
neighborhood $\,(c,d)\,$ of $\,0\,$ in $\,(a,b)$, as it constitutes a 
solution to an ordinary differential equation. Compactness of the fibres 
$\,E\nh_{\taw}\w=\{\taw\}\times L\hn_{\phi(\taw,z)}\w$ guarantees in turn 
that the maximal such $\,(c,d)\,$ equals $\,(a,b)$. Namely, if we had $\,c>a$,
or $\,d<b$, a neighborhood of $\,E\nh_c\w$ or $\,E\nh_d\w$ in $\,E\,$ forming
a union of horizontal curves would have the property that a horizontal lift 
entering it can be extended so as to reach $\,E\nh_c\w$ or $\,E\nh_d\w$ and 
beyond, contrary to maximality of $\,(c,d)$.

Note that $\,(a,b)\ni\taw\mapsto\lambda(\taw,x)$, for any given $\,x\in L$,
is a re\-pa\-ram\-e\-tri\-za\-tion of the integral curve 
$\,\taw\mapsto\phi\hs(\taw,x)\,$ of $\,v$. Thus, for any
$\,(\taw,x)\in(a,b)\times L$,
\begin{equation}\label{cxe}
\cx=\{\lambda(\taw,x):\taw\in(a,b)\}\,\mathrm{\ is\ an\ 
un\-pa\-ram\-e\-triz\-ed\ in\-te\-gral\ curve\ of\ }\,v
\end{equation}
passing through $\,x\,$ for $\,\taw=0\,$ and, for some $\,C^\infty\nnh$ 
function $\,\sigma:(a,b)\times L\to\bbR$,
\begin{equation}\label{ltx}
\lambda(\taw,x)\,=\,\phi\hs(\sigma(\taw,x),x)\hh,\hskip12pt\lambda(\taw,z)\,
=\,\phi(\taw,z) 
\end{equation}
whenever $\,(\taw,x)\in(a,b)\times L$. Also, 
\begin{equation}\label{sgm}
\sigma(0,x)=x\hh,\hskip5pt
\sigma(\taw,z)=\taw\hh,\hskip5pt
\phi\hs(\sigma(\taw,x),x)\nh\in\nh L_{\phi(\taw,z)}\w\hh,\hskip5pt
d\hs[\sigma(\taw,x)]/d\taw>0\hh,
\end{equation}
$d\hh[\sigma(\taw,x)]/d\taw\,$ being positive as it is nonzero everywhere and
$\,\sigma(\taw,z)=\taw$.

We now proceed to establish the following conclusion:
\begin{equation}\label{ecd}
\mathrm{there\ exist\ }\,c,d\,\mathrm{\ with\ }\,a<c<d<b\,\mathrm{\ and\
}\,L\nh_{\phi(c,z)}\w=L\hn_{\phi(d,z)}\w.
\end{equation}
We will achieve this by deriving a contradition from the assumption that
\begin{equation}\label{mtd}
L\nh_{\phi(\taw,z)}\w\mathrm{,\ for\ }\,\taw\in(a,b)\mathrm{,\ are\ all\
mutually\ distinct.}
\end{equation}
First, $\,\lambda(\taw,x)\in L_{\phi(\taw,z)}\w$ by (\ref{ltx}) --
(\ref{sgm}), so that (\ref{mtd}) and (\ref{cxe}) give
\begin{equation}\label{sgp}
\cx\cap L_{\phi(\taw,z)}\w=\{\lambda(\taw,x)\}\,\mathrm{\ for\ any\
}\,(\taw,x)\in(a,b)\times L\hh.
\end{equation}
Positivity of $\,d\hh[\sigma(\taw,x)]/d\taw\,$ -- see (\ref{sgm}) -- implies,
whenever $\,x\in L$, that $\,\sigma(\taw,x)\,$ has a limit
$\,\sigma(b,x)\le\infty\,$ as $\,\taw\to b$. As a further consequence of
(\ref{mtd}), $\,\sigma(b,x)<\infty$ for every $\,x\in L\,$ (and so, by
(\ref{sgm}), $\,b=\sigma(b,z)<\infty$). Otherwise, we may fix $\,x\in L\,$
and a strictly increasing sequence 
$\,\taw\nnh_j\w>a\,$ with $\,\taw\nnh_j\w\to b\,$ and 
$\,\sigma(\taw\nnh_j\w,x)\to\infty$ as $\,j\to\infty$. The sequence 
$\,\lambda(\taw\nnh_j\w,x)=\phi\hs(\sigma(\taw\nnh_j\w,x),x)$ lies in the
single integral curve $\,\cx$ and, passing to a sub\-se\-quence,
we may assume that it converges to some point $\,y\in M\nh$, which has a 
neighborhood in $\,M\,$ forming a union of ``short'' un\-pa\-ram\-e\-triz\-ed 
segments of in\-te\-gral curves of $\,v\,$ intersecting a neighborhood of
$\,y$ in the leaf $\,L\nh_y\w$. Since $\,\lambda(\taw\nnh_j\w,x)\to y\,$ while
its parameter $\,\sigma(\taw\nnh_j\w,x)\,$ increases towards an infinite 
limit and, due to (\ref{mtd}) -- (\ref{sgp}),
$\,(a,b)\ni\taw\mapsto\lambda(\taw,x)\,$ is injective, 
$\,\cx$ must contain infinitely many of the ``short'' in\-te\-gral-curve 
segments and, as a result, intersect some
leaves $\,L_{\phi(\taw,z)}\w$, with $\,\taw=\taw\nnh_j\w$, at infinitely
many points, contrary to (\ref{sgp}). Thus, $\,\sigma(b,x)<\infty\,$ whenever
$\,x\in L$.

Next, let us set $\,\lambda(b,x)=\phi\hs(\sigma(b,x),x)\,$ and 
denote by $\,L_{b,x}\w=L\nh_{\lambda(b,x)}\w$ the leaf of $\,\mathcal{V}\hs$
through $\,\lambda(b,x)$. Now
\begin{equation}\label{lcc}
\mathrm{the\ mapping\ }\,L\ni x\mapsto L\nh_{b,x}\w\mathrm{\ is\ locally\
constant,}
\end{equation}
and $\,L\ni x\mapsto\sigma(b,x)\,$ is
$\,C^\infty\nnh$-dif\-fer\-en\-ti\-a\-ble. In fact, given $\,x\in L$, the
integral curve (\ref{cxe}) joins $\,x\,$ to 
$\,\lambda(b,x)\in L_{b,x}\w$. Remark~\ref{flmap} applied to
our $\hs L\hs$ and $\,L\nnh'\nh=L_{b,x}\w$ yields a smooth variation of
integral curves $\,\cy\,$ of $\,v$, for all points $\,y\,$ from
a neighborhood $\,\,U\,$ of $\,x\,$ in $\,L$, with each $\,\cy$ joining
$\,y\,$ to a point in $\,L_{b,x}\w$. Each of these $\,\cy$ is
pa\-ram\-e\-trized by $\,\taw\mapsto\phi\hs(\taw,x)\,$ with $\,\taw\,$ ranging
over an interval $\,[\hs0,\taw\hn_*\w]$, where $\,\taw\hn_*\w$ depends on
$\,y$. As before, $\,\lambda(b,x)\,$ has a neighborhood in $\,M\,$
constituting a union of ``short'' un\-pa\-ram\-e\-triz\-ed
in\-te\-gral-curve segments intersecting a neighborhood of 
$\,\lambda(b,x)\,$ in $\,L_{b,x}\w$. One of these segments, the one 
passing through $\,\lambda(b,x)$, contains the portion 
$\,\{\lambda(\taw,x):b-\ve\le\taw\le b\}\,$ of $\,\cx$, with some $\,\ve>0$.
The third equality of (\ref{sgm}) along with (\ref{sgp}) for $\,y\in L\,$ near 
$\,x\,$ (rather than $\,x\,$ itself) show that each nearby $\,\cy$ similarly
contains 
$\,\{\lambda(\taw,y):b-\ve\le\taw<b\}$ and hence also the limit
$\,\lambda(b,x)$. However, by (\ref{sgp}), all $\,\lambda(\taw,y)\,$ lie 
in $\,L\nh_{\phi(\taw,z)}\w$, just as $\,\lambda(\taw,x)\,$ does,
and so $\,\lambda(b,y)\in L_{b,x}\w$, which gives  
$\,L_{b,y}\w=L_{b,x}\w$ and thus proves (\ref{lcc}). At the same time, 
Remark~\ref{flmap} yields smoothness of the mapping
$\,L\ni y\mapsto\sigma(b,y)$.

Since the leaf $\,L\,$ is connected, local constancy in (\ref{lcc}) amounts 
to constancy, so that
$\,L\nh_{\lambda(b,x)}\w=L\nh_{\phi\hs(b,z)}\w$ for all $\,x\in L$.
Thus, for every $\,x\in L$, the integral curve  
$\,[\hs0,\sigma(b,x)]\ni\taw\mapsto\phi\hs(\taw,x)\in M\,$ joins 
$\,x\,$ to the point $\,\lambda(b,x)\,$ in 
$\,L\nh_{\phi\hs(b,z)}\w$. 
Remark~\ref{flmap} also implies that the mapping 
$\,L\ni x\mapsto\lambda(b,x)\in L\nh_{\phi\hs(b,z)}\w$ is locally
dif\-feo\-mor\-phic. Compactness of $\,L\hs$ and Remark~\ref{lcdif} now yield 
compactness of $\,L\nh_{\phi\hs(b,z)}\w$ along with the second possibility
in (\ref{cpl}), for $\,L\nh_{\phi\hs(b,z)}\w$, since the first one is
precluded by compactness of the nearby leaves $\,L\nh_{\phi\hs(\taw,z)}\w$
with $\,a<\taw<b$. This in turn also implies
compactness of $\,L\nh_{\phi\hs(\taw,z)}\w$ for $\,\taw\ge b$, close to
$\,b$, contradicting maximality of $\,(a,b)$ and, consequently,
proving (\ref{ecd}).

Let us now fix $\,c,d\,$ with (\ref{ecd}). The image, under the mapping in
(\ref{lcd}), of the set $\,E_{[\hh c,d\hs]}\w
=\{(\taw,y)\in[\hh c,d\hs]\times M:y\in L\hn_{\phi(\taw,z)}\w\}\,$  is then clearly compact, but also open in $\,M\nh$. In fact, it
suffices to verify that $\,L\nh_{\phi(c,z)}\w$ is contained in the interior of
this image -- which trivially follows since the image contains both kinds of
sufficiently small one-sided neighborhoods of 
$\,L\nh_{\phi(c,z)}\w\nh=L\hn_{\phi(d,z)}\w$ in
$\,M\nh$. The image thus coincides with $\,M\nh$, which proves that
all leaves of $\,\mathcal{V}\hs$ are compact.

Therefore, there exists a bundle projection $\,M\nh\to S^1$ with the
leaves of $\,\mathcal{V}$ serving as the fibres 
\cite[Exercise 2.29(3)(i) on p.\ 49]{moerdijk-mrcun}.
\end{proof}

\section{Rank-one ECS metrics}\label{ro} 
\setcounter{equation}{0}
Let the data $\,\fh\nh,I\nh,n,\mv\nh,\lr,A\,$ consist of
\begin{equation}\label{dta}
\begin{array}{l}
\mathrm{a\ nonconstant\ }\,\,C^\infty\,\mathrm{\ function\
}\,\,\,\fh:I\hs\to\hh\bbR\hs\,\,\mathrm{\ on\ an\ open\ interval}\\
I\hskip1.5pt\subseteq\bbR\mathrm{,\ an\ integer\ }\,n\ge4\mathrm{,\ a\ real\
vector\ space\ }\,\mv\hh\mathrm{\ of\ dimension}\\
n\nh-\nh2\mathrm{,\nnh\ a\nh\ pseu\-do\hs}\hyp\mathrm{Euclid\-e\-an\nh\
inner\nh\ product\ }\nh\lr\hs\mathrm{\nh\ on\nh\ }\,\mv\nnh\nnh\mathrm{,\nh\
and\nh\ a\nh\ non}\hyp\\
\mathrm{zero,\ traceless,\ 
}\nnh\lr\hyp\mathrm{self}\hyp\mathrm{ad\-joint\ linear\
en\-do\-mor\-phism\ }\,A\,\mathrm{\ of\ }\,\hs\mv\nnh\nh.
\end{array}
\end{equation}
Following \cite{roter}, one then defines a rank-one ECS metric
\cite[Theorem~4.1]{derdzinski-roter-09}
\begin{equation}\label{met}
g\,=\,\kappa\,dt^2\nh+\,dt\,ds\hs+\hs\delta
\end{equation}
on the $\,n$-di\-men\-sion\-al manifold $\,I\nh\times\bbR\times\mv\nh$. The
products of differentials denote here symmetric products, $\,t,s\,$ are the
Cartesian coordinates on $\,I\nh\times\bbR\,$ treated, with the aid of the
projection $\,I\nh\times\bbR\times\mv\to I\nh\times\bbR$, as functions 
on $\,I\nh\times\bbR\times\mv\nh$, and $\,\delta\,$ is the pull\-back to
$\,I\nh\times\bbR\times\mv\hs$ of the flat \prc\ on $\,\mv\hs$
corresponding to the inner product $\,\lr$. Finally, 
$\,\kappa:I\nh\times\bbR\times\mv\nh\to\bbR\,$ is the function given by 
\begin{equation}\label{kap}
\kappa(t,s,x)\,=\,f(t)\hskip.4pt\langle x,x\rangle\,+\,\langle Ax,x\rangle\hh.
\end{equation}
If we let $\,i,j\,$ range over $\,2,\dots,n-1$, fix linear 
coordinates $\,x^i$ on $\,\mv$ and use them, along with $\,x^1\nh=t\,$ on
$\,I\hs$ and $\,x^n\nh=s/2\,$ on $\hs\bbR$, to form a global coordinate system
on $\,I\nh\times\bbR\times\mv\nnh$, then the pos\-si\-bly-non\-ze\-ro
components of the metric $\,\g\,$ and the Le\-vi-Ci\-vi\-ta connection
$\,\nabla\hs$ are \cite[p.\ 93]{roter} those algebraically related to
\begin{equation}\label{pnz}
\begin{array}{l}
g_{11}\w=\kx\hh,\hskip17ptg_{1n}\w=g_{n1}\w=1\hh,\hskip17pt
\mathrm{and\ (constants)\ }\,g_{ij}\w\hh,\\
\vg_{\hskip-2.7pt11}^{\hs n}=\partial\nh_1\w\kx/2\hh,\hskip19pt
\vg_{\hskip-2.7pt11}^{\hs i}=-g^{ij}\partial\nnh_j\w\kx/2\hh,\hskip19pt
\vg_{\hskip-2.7pt1i}^{\hs n}=\partial\nh_i\w\kx/2\hh.
\end{array}
\end{equation}
\begin{remark}\label{relax}Conversely \cite[Theorem~4.1]{derdzinski-roter-09},
in any $\,n$-di\-men\-sion\-al rank-one ECS manifold, some neighborhood of
any given point is isometric to an open su\-bset of a manifold of type
(\ref{met}), where one has (\ref{dta}) with one possible exception: $\,f\,$
may be constant. More precisely, $\,df/dt=0\,$ precisely at those points 
at which the covariant derivative $\,\nabla\hskip-1.5ptR\,$ of the curvature
tensor vanishes. Thus, if $\,f\,$ is constant on a sub\-in\-ter\-val $\,I\hn'$
of $\,I\nh$, the metric (\ref{met}) will have $\,\nabla\hskip-1.5ptR=0\,$ on 
$\,I\hn'\nh\times\bbR\times\mv\nh$.

In other words, a rank-one ECS manifold may have locally symmetric open
sub\-man\-i\-folds, and then the function $\,f\,$ appearing in the
lo\-cal-co\-or\-di\-nate form of \cite[Theorem~4.1]{derdzinski-roter-09} 
is constant.
\end{remark}

\section{Assumptions and notation}\label{an}
\setcounter{equation}{0}
Our $\,(M\nh,\g)\,$ is always a rank-one ECS manifold, often compact, and
$\,(\hm\nh,\hg)$ denotes its
pseu\-do\hs-Riem\-ann\-i\-an universal covering space, which leads to
\begin{equation}\label{ucp}
\mathrm{the\ universal\ covering\ projection\
}\,\pi:\hm\to M=\hm\nnh/\hh\Gm\hh,
\end{equation}
$\Gm\nh\,\approx\,\pi\nh_1\w\hn M\,$ being a group of isometries of
$\,(\hm\nh,\hg)\,$ acting on $\,\hm\,$ freely and properly 
dis\-con\-tin\-u\-ous\-ly. Most of the time, the same symbols will stand for 
objects in $\,\hm$ and their projections in $\,M\nh$, such as the metric
$\,\g$, and the (rank-one) Ol\-szak distribution 
$\,\mathcal{D}\,$ described in the Introduction. In any rank-one ECS manifold,
the distribution $\,\mathcal{D}$, being parallel, carries a linear
connection induced by the Le\-vi-Ci\-vi\-ta connection of $\,\g$, and this
induced connection is flat since, locally, 
$\,\mathcal{D}\,$ is spanned by the parallel gradient 
$\,\nabla\hn t\,$ of the coordinate function $\,t\,$ appearing in 
(\ref{met}) -- see \cite[the lines following formula
(3.6)]{derdzinski-terek-ne}. (As stated in Remark~\ref{relax}, (\ref{met}) is a
general local description of all rank-one ECS metrics.) Simple connectivity of
$\,\hm\,$ allows
us to choose a global parallel vector field $\,w\,$ on $\,\hm\hs$ spanning
$\,\mathcal{D}$,
and then the $\,1$-form $\,\g(w,\,\cdot\,)$, being parallel, is closed, and
hence exact, so that we may also fix a $\,C^\infty\nnh$ function
$\,t:\hm\to\bbR\,$
with $\,dt=\g(w,\,\cdot\,)$. This determines $\,t\,$ uniquely up to af\-fine
substitutions, that is, replacements of $\,t\,$ by $\,qt+p\,$ with
$\,q,p\in\bbR\,$ and $\,q>0$. Positivity of $\,q\,$ reflects the fact
that we always assume transversal orientability of
the orthogonal complement $\,\mathcal{D}^\perp\nnh$, 
which can be achieved by 
replacing $\,(M\nh,\g)\,$ with a two-fold isometric covering, and
amounts to requiring that $\,\Gm\,$ be a sub\-group of the group
$\,\mathrm{Iso}\nh^+\nh(\hm\nh,\g)\,$ of all self-isom\-e\-tries of
$\,(\hm\nh,\g)\,$ preserving a fixed transversal orientation of
$\,\mathcal{D}^\perp\nnh$. Thus, for every
$\,\gamma\in\mathrm{Iso}\nh^+\nh(\hm\nh,\g)$,
\begin{equation}\label{tqp}
\mathrm{there\ exist\ unique\ }\,q,p\in\bbR\,\mathrm{\ with\
}\,q>0\,\mathrm{\ and\ }\,t\circ\gamma=qt+p\hh. 
\end{equation}
The assignment $\,\gamma\mapsto q\,$ is a
group homo\-mor\-phism $\,\mathrm{Iso}\nh^+\nh(\hm\nh,\g)\to(0,\infty)$, and
we refer to $\,q\,$ as the {\it$\,q$-im\-a\-ge\/} of $\,\gamma$. 
Summarizing, 
\begin{equation}\label{wnt}
w=\nabla\hn t\,\mathrm{\ is\ a\ parallel\ vector\ field\ on\
}\,\hm\nh\mathrm{,\ spanning\ }\,\mathcal{D}\mathrm{,\ and\
}\,dt=\g(w,\,\cdot\,)\hh.
\end{equation}
As a consequence of (\ref{wnt}), 
\begin{equation}\label{ker}
\mathcal{D}^\perp\nh=\,\hs\mathrm{Ker}\,dt\,\,\mathrm{\ on\ }\,\hs\hm.
\end{equation}
Since (\ref{met}) remains unchanged when $\,t\,$ and 
$\,s\,$ are replaced by $\,qt+p\,$ and $\,q^{-\nnh1}\nh s$,
\begin{equation}\label{our}
t\,\mathrm{\ in\ (\ref{met})\ can\ always\ be\ made\ equal\ to\ our\
}\,t\,\mathrm{\ chosen\ as\ above,}
\end{equation}
cf.\ Remark~\ref{relax}. 
According to \cite[p.\ 93]{roter}, where the curvature tensor has the sign
opposite to ours, the metric (\ref{met}) has the Ric\-ci tensor 
$\,\mathrm{Ric}=(2-n)\fh(t)\,dt\otimes dt$, for $\,n=\dim M\nh$.
Therefore, by Remark~\ref{relax}, with our $\,t\,$ chosen as above,
\begin{equation}\label{ric}
\mathrm{on\ }\,\,(\hm\nh,\g)\hs\,\mathrm{\ one\ has\ }\,\,\mathrm{Ric}\,
=\,(2-n)\fh\hs dt\otimes dt
\end{equation}
for a unique (nonconstant) function $\,f:\hm\to\bbR\,$ and, again by
Remark~\ref{relax}, 
\begin{equation}\label{flt}
f\,\,\mathrm{\ is\ locally\ a\ function\ of\ }\,\,t\hh.
\end{equation}
Changing the notational convention so as to absorb the factor $\,2-n\,$ into
the function $\,f\,$ does not seem to be a good option, since the inverse of
that factor then would have to appear in (\ref{kap}). Also, the word `locally'
in (\ref{flt}) cannot in general be skipped: it means that $\,f\,$ is constant
on the connected components of the level sets of $\,t$, and the level sets
themselves may be disconnected.

If $\,\chi:\hm\to\bbR\,$ is of class $\,C^1\nnh$ and, locally, a
function of $\,t\,$ (which amounts to $\,d\chi\,$ being a functional 
multiple of $\,dt$), we define the derivative $\,\dot\chi:\hm\to\bbR\,$ by 
$\,d\chi=\dot\chi\nnh\,dt$, so that, in terms of a local expression of
$\,\g\,$ in (\ref{met}) -- (\ref{pnz}), $\,\dot\chi=d\chi/dt$. 
For $\,f$ in (\ref{ric}) -- (\ref{flt}), any $\,\gamma\in\Gm\nh$, any
$\,C^1$ function $\,\chi:\hm\to\bbR\,$ which is locally a function of $\,t$,
and $\,q,p,a\in\bbR\,$ with $\,q>0$,
\begin{equation}\label{fcg}
\begin{array}{l}
\mathrm{if\ \ }\,t\circ\gamma\,=\,qt\,+\,p\mathrm{,\ \ then\ \ }\,\gamma^*dt\,
=\,q\,dt\,\mathrm{\ \ and\ \ }\,f\circ\gamma\,=\,q^{-\nh2}f,\\
\mathrm{if\ \ }\,\chi\circ\gamma=q\hh^a\chi\mathrm{,\ \ then\
}\,\dot\chi\circ\gamma=q\hh^{a-1}\chi\hh,
\end{array}
\end{equation}
which is clear from (\ref{ric}) and the fact that the pull\-back of
differential forms commutes with exterior differentiation. By
(\ref{wnt}) and (\ref{fcg}), for any
$\,\gamma\in\mathrm{Iso}\nh^+\nh(\hm\nh,\g)$,
\begin{equation}\label{gpw}
\gamma\,\,\mathrm{\ pulls\ }\,w\,\mathrm{\ back\ to\ }\,qw\mathrm{,\ where\
}\,q\in(0,\infty)\,\mathrm{\ is\ the\ }\,q\hyp\mathrm{im\-a\-ge\ of\
}\,\gamma\hh.
\end{equation}
Let us point out that the choices of $\,t\,$ and $\,w\,$ made above
are convenient, but not canonical, and so, rather than being preserved by
isometries, $\,t\,$ and $\,w\,$ are transformed by them via (\ref{fcg}) and
(\ref{gpw}).

\section{Local homogeneity}\label{lh}
\setcounter{equation}{0}
We adopt here the assumptions and notation of Section~\ref{an}. First, note
that
\begin{equation}\label{rns}
\mathrm{in\ any\ ECS\ manifold,\ }\, \mathrm{Ric}\ne0\,\,\mathrm{\ somewhere,}
\end{equation}
or else the curvature and Weyl tensors would coincide,
implying local symmetry.

Just as was the case with $\,\mathcal{D}$, the distribution
$\,\mathcal{D}^\perp\nnh$ is parallel, and so it inherits a linear 
connection from the Le\-vi-Ci\-vi\-ta connection $\,\nabla\hs$ of $\,\g$. As 
$\,\mathcal{D}\subseteq\mathcal{D}^\perp\nnh$, a natural connection
arises in the quotient bundle
$\,\mathcal{E}=\mathcal{D}^\perp\hskip-2pt/\mathcal{D}\,$
over $\,\hm\nh$, or $\,M\nh$, and
\begin{equation}\label{tcf}
\mathrm{\ the\ connection\ induced\ by\ }\,\nabla\hs\mathrm{\ in\ 
}\,\hs\mathcal{E}=\mathcal{D}^\perp\hskip-2pt/\mathcal{D}\,\mathrm{\ is\ flat.}
\end{equation}
This was established in \cite[Lemma 2.2\hs-f]{derdzinski-roter-09}, but we
need to justify it here, again, to draw additional conclusions. 
Namely, by (\ref{pnz}) and (\ref{wnt}), the coordinate vector field 
$\,\partial\nh_n\w=w=\nabla\hn t$, spanning $\,\mathcal{D}$, is parallel,
while $\,\partial\nh_n\w$ and $\,\partial\nh_i\w$, for $\,i,j=2,\dots,n-1$,
span $\,\mathcal{D}^\perp\nnh$. Now (\ref{tcf}) follows since, in (\ref{pnz}), 
$\,\vg_{\hskip-2.7pt1n}^{\hs\bullet}=\vg_{\hskip-2.7pt1i}^{\hs\bullet}
=\vg_{\hskip-2.7ptin}^{\hs\bullet}=\vg_{\hskip-2.7ptij}^{\hs\bullet}
=\vg_{\hskip-2.7ptnn}^{\hs\bullet}=\vg_{\hskip-2.7ptni}^{\hs\bullet}=0$, 
with $\,\bullet\,$ denoting any index other than $\,n$, so that
$\,\partial\nh_i\w$, $\,i=2,\dots,n-1$, project onto parallel local 
trivializing sections of $\,\mathcal{E}\nh$. 
As $\,\partial\nh_i\w$ are also constant vector fields on the
space $\,\mv$ in (\ref{dta}) -- (\ref{met}), we may identify $\,\mv\hs$
with the space of parallel sections of
$\,\mathcal{E}\nh$, defined on the coordinate domain.
The parallel vec\-tor-bun\-dle morphism
$\,[\mathcal{D}^*]^{\otimes2}
\to\mathcal{E}^{\otimes2}\nnh$, over any rank-one
ECS manifold, defined in \cite[formula (6)]{derdzinski-roter-08} (where it was 
denoted by $\,\varPhi$, and built from the Weyl tensor $\,W\nnh$, cf.\ the
Appendix), is easily seen
to be valued in $\,\mathcal{E}^{\odot2}\nnh$. Let the parallel section
$\,\xi\,$ of the line bundle $\,\mathcal{D}^*$ over $\,\hm\,$ be
\begin{equation}\label{dua}
\mathrm{dual\ to\ }\,w\,\mathrm{\ in\ the\ sense\ that\ }\,\xi(w)=1\hh,
\end{equation}
for the trivializing parallel section $\,w\,$ of $\,\mathcal{D}\,$ appearing
in (\ref{wnt}). Thus, over $\,\hm\nh$, the morphism
$\,[\mathcal{D}^*]^{\otimes2}\to\mathcal{E}^{\otimes2}$ sends
$\,\xi\hn\otimes\hs\xi\,$ to a parallel section of 
$\,\mathcal{E}^{\odot2}\nh$, which may also be treated as a parallel
vec\-tor-bun\-dle en\-do\-mor\-phism of $\,\mathcal{E}\nh$, due to the
presence in $\,\mathcal{E}\,$ of the parallel fibre metric
induced by $\,\g$. This last en\-do\-mor\-phism acts on local parallel
sections
of $\,\mathcal{E}$ and, when the space of these 
sections is identified with $\,\mv\hs$ (see above), it becomes the
en\-do\-mor\-phism $\,A:\mv\nh\to\mv\hs$ of (\ref{dta}). (One sees this 
comparing formula (6) in \cite{derdzinski-roter-08} with the expression, 
in \cite[p.\ 93]{roter}, for the only pos\-si\-bly-non\-ze\-ro essential
component $\,W\hskip-2.5pt_{1i1\nh j}\w$ of the Weyl tensor $\,W\nnh$.)
We will therefore use the symbols $\,\mv$ and $\,A:\mv\nh\to\mv\hs$ for 
the space of parallel sections of
$\,\mathcal{E}\,$ over $\,\hm\nh$, and 
for the en\-do\-mor\-phism just described. Note that, due to simple
connectivity of $\,\hm\,$ and (\ref{tcf}), the bundle
$\,\mathcal{E}\,$ is trivialized by global
parallel sections.

By (\ref{tqp}) and (\ref{gpw}), any
$\,\gamma\in\mathrm{Iso}\nh^+\nh(\hm\nh,\g)\,$ pulls 
$\,w$, or $\,\xi\,$ appearing in (\ref{dua}), back to $\,qw\,$ or,
respectively, $\,q^{-\nnh1}\xi$, for
some $\,q\in(0,\infty)$. Since $\,A:\mv\nh\nnh\to\mv\hs$ as
interpreted above was the image of $\,\xi\hn\otimes\hs\xi\,$ under a natural
vec\-tor-bun\-dle morphism, the
isometry in question pulls $\,A\,$ back to $\,q^{-\nh2}\nh\nnh A$, while it
also acts as a linear isometry $\,B\,$ of the space $\,\mv\hs$ of parallel
sections. In terms of push-for\-wards rather than pull\-backs,
\begin{equation}\label{pfw}
B\nh AB^{-\nnh1}\,=\,\hs q^2\nh\nnh A\hh.
\end{equation}
\begin{lemma}\label{babqa}Let\/ $\,A\,$ be a nonzero linear
en\-do\-mor\-phism of a pseu\-do\hs-Euclid\-e\-an vector space\/ $\,\mv\nnh$.
If, for some\/ $\,q\in(0,\infty)\smallsetminus\{1\}\,$ there exists a linear
isometry\/ $\,B\,$ of\/ $\,\mv\hs$ satisfying\/ {\rm(\ref{pfw})} then, with
our fixed\/ $\,A$, such\/ $\,B\,$ exists for every\/ $\,q\in(0,\infty)$.
\end{lemma}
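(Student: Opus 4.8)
The plan is to extract from the hypothesis that $A$ must be nilpotent, and then to manufacture a one-parameter group of isometries whose conjugation action rescales $A$ by an arbitrary positive factor. Let $q_0\,(\ne1)$ and $B_0$ denote the $q$ and $B$ furnished by the hypothesis, so that $B_0AB_0^{-1}=q_0^2A$ by (\ref{pfw}). This relation says that $A$ and $q_0^2A$ are conjugate, hence share the same eigenvalues, with multiplicities, over $\bbC$. As $q_0>0$ and $q_0\ne1$, we have $q_0^2\ne1$, so the eigenvalue multiset of $A$ is invariant under $\mu\mapsto q_0^2\mu$. If some eigenvalue $\mu$ were nonzero, iterating this map would produce the infinitely many distinct eigenvalues $q_0^{2k}\mu$ with $k\in\bbZ$, which is impossible in finite dimension. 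Hence every eigenvalue of $A$ vanishes and $A$ is nilpotent.

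Next I would produce a \emph{skew-adjoint} operator $H$ of $\mv$, meaning one with $\langle Hx,y\rangle=-\langle x,Hy\rangle$, satisfying $HA-AH=A$. Since $A$ is both $\langle\,\cdot\,,\,\cdot\,\rangle$-self-adjoint and nilpotent, $\mv$ decomposes $\langle\,\cdot\,,\,\cdot\,\rangle$-orthogonally into $A$-invariant, $\langle\,\cdot\,,\,\cdot\,\rangle$-nondegenerate indecomposable summands, each being either a single Jordan block carrying a nondegenerate form, or a dually paired couple of equal-sized isotropic Jordan blocks. On a Jordan chain $v_1,\dots,v_m$ with $Av_j=v_{j-1}$ and $Av_1=0$, I set $Hv_j=\bigl((m+1)/2-j\bigr)v_j$. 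These weights drop by $1$ along the chain, which gives $HA-AH=A$, and they are symmetric about $0$; since the form pairs $v_j$ with $v_{m+1-j}$ (respectively with the opposite block) and $(m+1)/2-(m+1-j)=-\bigl((m+1)/2-j\bigr)$, the operator $H$ is skew-adjoint. The dually paired case is handled the same way. Summing over all summands yields a global skew-adjoint $H$ with $HA-AH=A$.

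Finally, $s\mapsto e^{sH}$ is a one-parameter group of self-isometries of $(\mv,\langle\,\cdot\,,\,\cdot\,\rangle)$, because skew-adjointness gives $(e^{sH})^*=e^{-sH}=(e^{sH})^{-1}$. The curve $g(s)=e^{sH}Ae^{-sH}$ satisfies $g'(s)=e^{sH}(HA-AH)e^{-sH}=g(s)$ with $g(0)=A$, so $g(s)=e^sA$. Thus, for any prescribed $q\in(0,\infty)$, the isometry $B=e^{(2\log q)H}$ obeys $BAB^{-1}=e^{2\log q}A=q^2A$, which is exactly (\ref{pfw}); this establishes the claim for every $q$.

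The step I expect to be the main obstacle is the middle one, exhibiting the skew-adjoint $H$. It rests on the canonical form of a self-adjoint nilpotent operator on an indefinite inner-product space, whose two types of indecomposable block must both be checked so as to confirm that the symmetric, integer-spaced weight assignment is compatible with skew-adjointness. (Equivalently, one may invoke a metric-compatible refinement of the Jacobson--Morozov theorem producing the standard representation-theoretic weight grading, with $A$ self-adjoint and its neutral element skew-adjoint.) By contrast, the first step is a short spectral argument and the last is a routine exponential computation.
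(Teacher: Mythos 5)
Your argument is correct in substance, but it proves a narrower statement than the lemma asserts: the middle step invokes the canonical form of a \emph{self-adjoint} nilpotent operator, yet self-adjointness of $\,A\,$ is not among the lemma's hypotheses --- the statement allows an arbitrary nonzero endomorphism of $\,\mv\nh$. This is harmless for the paper's purposes, since the only $\,A\,$ to which the lemma is ever applied is the $\,\lr$-self\-adjoint endomorphism of (\ref{dta}), via (\ref{pfw}); but as a proof of the lemma as stated there is a genuine gap: without self-adjointness you have no orthogonal decomposition into Jordan-type blocks, and the existence of a skew-adjoint $\,H\,$ with $\,HA-AH=A\,$ is no longer clear. (Your first step, nilpotency, uses nothing but the eigenvalue argument and is fine in full generality; the paper records that same observation separately, in Remark~\ref{lorlh}.)

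Beyond this caveat, your route is genuinely different from the paper's. The paper's proof is soft: it regards $\,\mathrm{O}\hs(\mv)\,$ as a real algebraic variety, applies Chevalley's theorem on images of polynomial maps to $\,B\mapsto B\nh AB^*\nh$, and concludes that the conjugation orbit of $\,A$, meeting the line $\,\bbR A\,$ in the infinite set $\,\{q^{2r}\nnh A:r\in\bbZ\}$, must fill out essentially the whole line. That argument never uses self-adjointness and produces no explicit isometries. Yours is constructive: nilpotency, then an explicit metric-compatible weight grading $\,H$, then the one-parameter group $\,s\mapsto e^{sH}$ of isometries realizing every rescaling $\,q^2\nh$. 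What your approach buys: explicit $\,B\,$ lying in the identity component of $\,\mathrm{O}\hs(\mv)$, the stronger structural conclusion that the admissible rescalings form a one-parameter group, and independence from algebraic-geometry machinery --- whose use over $\,\bbR\,$ requires care anyway, images of real polynomial maps being in general only semialgebraic rather than constructible. What it costs is precisely the restriction to self-adjoint $\,A$. A minor point: for a self-adjoint nilpotent operator on a real pseudo-Euclidean space the indecomposable summands are single Jordan blocks carrying an anti-diagonal $\,\pm\,$ form; your second case of dually paired isotropic blocks does not actually occur (such a pair decomposes further), though including it does no harm.
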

\begin{proof}For the space $\,\mathrm{End}\,\mv\hs$ of all linear 
en\-do\-mor\-phisms $\,\mv\nh\nnh\to\mv\nh$, the formula 
$\,\mathcal{J}=\{(q,B)\in\bbR\times\mathrm{End}\,\mv:(BB^*\nh,B\nh AB^*) 
=(\mathrm{Id},q^2\nh\nnh A)\}\,$ defines an algebraic variety
$\,\mathcal{J}\subseteq\bbR\times\mathrm{End}\,\mv\nh$.
By Whitney's classical result 
\cite[Theorem 3]{whitney}, $\,\mathcal{J}\,$ has finitely many connected 
components, and hence so does the intersection $\,K=K'\nh\cap(0,\infty)\,$ for
the image $\,K'$ of $\,\mathcal{J}\,$ under the projection
$\,(q,B)\mapsto q$. As $\,q^r\nh\in K\,$ whenever $\,\q\in K$ and 
$\,r\in\bbZ$, Remark~\ref{ccomp} now gives $\,K=(0,\infty)$.
\end{proof}
We have the following immediate consequence of (\ref{rns}):
\begin{equation}\label{rnz}
\mathrm{local\ homogeneity\ of\ }\,(M\nh,\g)\,\mathrm{\ or\
}\,(\hm\nh,\hg)\,\mathrm{\ implies\ that\ }\,\mathrm{Ric}\ne0\,\mathrm{\ 
everywhere.}
\end{equation}
If $\,\mathrm{Ric}\ne0\,$ everywhere, it follows from (\ref{ric}) --
(\ref{fcg}) that
\begin{equation}\label{ivf}
|f|^{1/2}dt\,\mathrm{\ is\ a\ closed\ }\,\Gm\nh\hyp\mathrm{in\-var\-i\-ant\
}\,C^\infty\nnh\ \,1\hyp\mathrm{form\ without\ zeros\ on\ }\,\hm.
\end{equation}
Closedness is here due to (\ref{flt}). Thus, in view of (\ref{wnt}), the
$\,C^\infty\nnh$ vector field $\,w'\nh=|f|^{1/2}\hs w\,$ is
$\,\pi$-pro\-ject\-a\-ble onto a vector field without zeros on 
$\,M=\hm\nnh/\hh\Gm\nh$, also denoted by $\,w'$ and, from
(\ref{wnt}), (\ref{ker}) and (\ref{flt}),
\begin{equation}\label{par}
\mathrm{on\ }\,M\nh\mathrm{,\ if\ }\,\mathrm{Ric}\ne0\,\mathrm{\ everywhere,\
}\,w'\hs\mathrm{\ spans\ }\,\,\mathcal{D}\hs\,\mathrm{\ and\ is\ parallel\
along\ }\,\hs\mathcal{D}^\perp\nnh.
\end{equation}
\begin{lemma}\label{gnobs}Let\/ $\,(\hm\nh,\hg)\,$ be the
pseu\-do\hs-Riem\-ann\-i\-an universal covering space of a compact rank-one
ECS manifold. If\/ $\,\hm\hs$ admits a closed\/ 
$\,\Gm\nh$-in\-var\-i\-ant\/ $\,C^\infty\nnh$ $\,1$-form without zeros which 
is a functional multiple of\/ $\,dt$, for the function\/ 
$\,t:\hm\to\bbR$ introduced in Section\/~{\rm\ref{an}}, then the leaves
of\/ $\,\mathcal{D}^\perp\hskip-2pt$ in\/ $\,\hm\hs$ are the factor manifolds
of a global product decomposition of\/ $\,\hm\nh$, and coincide with the
levels of\/ $\,t$.
\end{lemma}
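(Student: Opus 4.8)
The plan is to turn the given closed, nowhere-vanishing, $\,\Gm$-invariant $\,1$-form, call it $\,\om$, into a complete transverse flow that globally trivializes the foliation $\,\mathcal{D}^\perp\nnh$. Since $\,\om\,$ is a functional multiple of $\,dt$, say $\,\om=\phi\,dt$, and $\,\om\,$ has no zeros while $\,dt\,$ has none either (as $\,w=\nabla t\,$ is nowhere zero), the function $\,\phi\,$ is nowhere zero, of constant sign on the connected space $\,\hm$, and $\,\mathrm{Ker}\,\om=\mathrm{Ker}\,dt=\mathcal{D}^\perp\,$ by (\ref{ker}). Simple connectivity of $\,\hm\,$ makes the closed form $\,\om\,$ exact, so $\,\om=dh\,$ for some $\,h:\hm\to\bbR$; this $\,h\,$ is a submersion (its differential $\,\om\,$ never vanishes) and is constant along the leaves of $\,\mathcal{D}^\perp\nnh$, which are therefore unions of connected components of the level sets of $\,h\,$ (equivalently, of $\,t$).

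First I would produce a complete vector field transverse to $\,\mathcal{D}^\perp\nnh$. Because $\,\om\,$ is $\,\Gm$-invariant, it descends to a nowhere-zero $\,1$-form $\,\bar\om\,$ on $\,M=\hm/\Gm$. Fixing an auxiliary Riemannian metric on the compact manifold $\,M\,$ and setting $\,\bar u=\bar\om^\sharp/\langle\bar\om,\bar\om\rangle\,$ gives a smooth vector field with $\,\bar\om(\bar u)=1$; its lift $\,u\,$ to $\,\hm\,$ is $\,\Gm$-invariant and satisfies $\,\om(u)=1$. The key use of compactness is here: $\,\bar u\,$ is complete on the compact $\,M$, and lifting its integral curves through the covering projection $\,\pi\,$ shows that $\,u\,$ is complete on $\,\hm$. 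Writing $\,\psi_\tau\,$ for its flow, $\,\frac{d}{d\tau}h(\psi_\tau(\hat x))=\om(u)=1$, so $\,h(\psi_\tau(\hat x))=h(\hat x)+\tau$; in particular $\,h\,$ is surjective onto $\,\bbR$, and $\,\psi_\tau\,$ preserves the foliation $\,\mathrm{Ker}\,dh=\mathcal{D}^\perp\,$ while translating $\,h\,$ by $\,\tau$.

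I would then read off the product decomposition from this flow. The map $\,\Psi:\bbR\times h^{-1}(0)\to\hm$, $\,(\tau,y)\mapsto\psi_\tau(y)$, is smooth, and $\,h(\Psi(\tau,y))=\tau\,$ forces it to be bijective (injectivity by applying $\,h\,$ and then invertibility of $\,\psi_\tau$; surjectivity by flowing any point back to the zero level), while its differential is an isomorphism because $\,u\,$ is transverse to $\,\mathrm{Ker}\,dh\,$ and $\,d\psi_\tau\,$ preserves $\,\mathrm{Ker}\,dh$. Hence $\,\Psi\,$ is a diffeomorphism exhibiting $\,\hm\,$ as a global product $\,\bbR\times h^{-1}(0)\,$ whose $\,h^{-1}(0)$-factor is a level set of $\,h$. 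Connectedness of $\,\hm\,$ forces $\,h^{-1}(0)$, and likewise each $\,h^{-1}(c)=\Psi(\{c\}\times h^{-1}(0))$, to be connected, so these level sets are exactly the leaves of $\,\mathcal{D}^\perp\nnh$. Finally, since $\,t\,$ is constant on each (connected) level set of $\,h\,$ and $\,\phi\,$ has constant sign, one gets $\,t=T\circ h\,$ for a diffeomorphism $\,T\,$ onto the image of $\,t$; thus the levels of $\,t\,$ coincide with the levels of $\,h$, i.e.\ with the leaves, which are the factor manifolds of the product.

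I would expect the only genuine obstacle to be the completeness step, namely deducing completeness of the transverse field $\,u\,$ on $\,\hm\,$ from compactness of $\,M\,$ via path lifting along $\,\pi$ --- this is precisely where the compactness hypothesis enters, and without it the global trivialization (and the conclusion) can fail. The remaining steps, constructing $\,\bar u\,$ from an auxiliary metric, exactness of $\,\om$, and verifying that $\,\Psi\,$ is a diffeomorphism, are routine once the complete transverse flow is in hand.
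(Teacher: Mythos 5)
Your proof is correct and follows essentially the same route as the paper's: both take an antiderivative of the given form, build the normalized transverse vector field from an auxiliary Riemannian metric on the compact quotient (so that completeness follows from compactness plus lifting along the covering), and use its flow to identify $\,\hm\,$ with $\,\bbR\times L\,$ for a level $\,L$, deducing connectedness of the leaves from connectedness of $\,\hm$. The only cosmetic difference is that you verify the diffeomorphism property of the flow map directly, where the paper invokes its Remark on flows and Milnor's standard argument.
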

\begin{proof}Up to a change of sign, the $\,1$-form in question may be
expressed as $\,\psi\,dt$, where $\,\psi:\hm\to(0,\infty)\,$ is locally a 
function of $\,t$. Choosing $\,\chi:\hm\to\bbR\,$ with 
$\,d\chi=\psi\,dt$, and denoting by $\,D\chi\,$ the $\,h$-grad\-i\-ent of 
$\,\chi$, where $\,h\,$ is the $\,\pi$-pull\-back of a Riemannian metric on 
$\,M\nh$, we see that $\,D\chi\,$ and $\,u=D\chi/[h(D\chi,D\chi)]\,$ are both
nonzero everywhere and complete (the latter since
$\,d\chi=h(D\chi,\,\cdot\,)$, with $\,d\chi\,$ and $\,h\,$ both
$\,\pi$-pro\-ject\-a\-ble onto $\,M$). By (\ref{ker}), the (possibly
disconnected) levels of $\,\chi$ are unions of leaves of
$\,\mathcal{D}^\perp\nnh$.

We now use Milnor’s argument \cite[p.\ 12]{milnor} involving the flow
$\,(\taw,x)\mapsto\phi\hs(\taw,x)$ of $\,u$. Remark~\ref{flows} may be
applied to a fixed (possibly disconnected) level $\,L\,$ of $\,\chi$ and the
restriction of the flow $\,\phi\,$ to $\,\bbR\times L$. The restriction
is not only locally dif\-feo\-mor\-phic, but bijective as well, since
$\,d_u\chi=1\,$ and so the parameter $\,\taw\,$ of each integral curve differs
from $\,\chi\,$ by a constant. Since $\,\bbR\times L$, dif\-feo\-mor\-phic 
to $\,\hm\nh$, must be connected, the assertion follows, the levels
of $\,t\,$ being the same as those of $\,\chi\,$ (and thus equal to
the leaves of $\,\mathcal{D}^\perp$) due to positivity of $\,\psi=d\chi/dt$.
\end{proof}
\begin{theorem}\label{lchom}Let\/
$\,\hm\nh,\g,\Gm\nh,f\nh,t,
\,(\hskip2.3pt)\hskip-2.2pt\dot{\phantom o}\nh=\hs d/dt\,$ and\/ 
$\,M=\hm\nnh/\hh\Gm\hs$ be as in Section\/~{\rm\ref{an}}. If\/ $\,M\hs$ is
compact, the following three conditions are mutually equivalent.
\begin{enumerate}
  \def\theenumi{{\rm\roman{enumi}}}
\item $(\hm\nh,\hg)$, or\/ $\,(M\nh,\g)$, is locally homogeneous.
\item $\fh\ne0\,$ everywhere and\/ 
$\,(|\fh|^{-\nnh1/2})\hskip-1.2pt\ddot{\phantom o}\nh=\hs0$.
\item $(|\fh|^{-\nnh1/2})\hskip-1.2pt\ddot{\phantom o}\nh=\hs0\,$ wherever\/ 
$\,\fh\ne0$.
\end{enumerate}
\end{theorem}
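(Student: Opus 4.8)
The plan is to establish the two equivalences $(\mathrm{ii})\Leftrightarrow(\mathrm{iii})$ and $(\mathrm{i})\Leftrightarrow(\mathrm{ii})$, with compactness entering only in $(\mathrm{ii})\Rightarrow(\mathrm{i})$. Throughout I set $\eta=|\fh|^{-1/2}$, which is defined, positive, and (by (\ref{flt})) locally a function of $t$ wherever $\fh\ne0$, so that $\dot\eta$ and $\ddot\eta$ are meaningful there. The one computation I would record at the outset is the transformation rule: by (\ref{fcg}), any $\gamma$ (or local isometry $\Psi$) with $t\circ\gamma=qt+p$ satisfies $\fh\circ\gamma=q^{-2}\fh$, hence $\eta\circ\gamma=q\,\eta$, and therefore $\dot\eta\circ\gamma=\dot\eta$ and $\ddot\eta\circ\gamma=q^{-1}\ddot\eta$. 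The relation $t\circ\Psi=qt+p$ for local isometries is justified exactly as in (\ref{tqp}), since $\mathcal{D}$ is parallel and metric-canonical.

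The implication $(\mathrm{ii})\Rightarrow(\mathrm{iii})$ is immediate, so for $(\mathrm{ii})\Leftrightarrow(\mathrm{iii})$ it remains to upgrade (iii) to $\fh\ne0$ everywhere (after which (iii) reads $\ddot\eta=0$ on all of $\hm$, giving (ii)). Suppose instead $\fh$ vanishes somewhere. As $\fh$ is nonconstant and $\hm$ is connected, the closed set $\{\fh=0\}$ is not open, so it has a boundary point $x_1$ with $\fh(x_1)=0$ that is a limit of points where $\fh\ne0$. Near $x_1$ one has $\fh=F(t)$ for a smooth one-variable $F$ with $F(t_1)=0$, where $t_1=t(x_1)$; and since $dt\ne0$ makes $t$ an open map, nonzero values of $F$ occur arbitrarily close to $t_1$. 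Hence there is a bounded component $J$ of $\{F\ne0\}$ with an endpoint at which $F$ vanishes; on $J$ condition (iii) forces $\eta=|F|^{-1/2}$ to be affine, yet $\eta\to+\infty$ at that endpoint, which no affine function can do on a finite interval. This contradiction yields $\fh\ne0$ everywhere. Note this step is purely local and uses neither compactness nor $\Gm$.

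For $(\mathrm{i})\Rightarrow(\mathrm{ii})$, local homogeneity first gives $\mathrm{Ric}\ne0$ everywhere by (\ref{rnz}), i.e.\ $\fh\ne0$ everywhere, so $\eta$ and $\dot\eta$ are globally defined. The key point is that $\dot\eta$ is a scalar function \emph{invariant} under every local isometry, the factor $q$ having cancelled in the rule above. Local homogeneity means the local-isometry pseudogroup acts transitively, so any invariant scalar is constant; thus $\dot\eta$ is constant and $\ddot\eta=0$, which is precisely (ii).

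The substantive direction, and the only place compactness is used, is $(\mathrm{ii})\Rightarrow(\mathrm{i})$. Assuming (ii), $\eta=\alpha t+\beta$ with $\alpha\ne0$ (as $\fh$ is nonconstant). First I would rule out that every $\gamma\in\Gm$ has $q$-image $1$: in that case $t\circ\gamma=t+p_\gamma$, and the invariance $\eta\circ\gamma=\eta$ would give $\alpha p_\gamma=0$, so $p_\gamma=0$ and $t$ would descend to the compact manifold $M$; but $dt=\g(w,\cdot)$ has no zeros ($w\ne0$, $\g$ nondegenerate), contradicting the existence of a critical point at a maximum. Hence some $\gamma_0$ has $q_0\ne1$, and (\ref{pfw}) supplies an isometry $B_0$ of $\mv$ with $B_0AB_0^{-1}=q_0^2A$; Lemma~\ref{babqa} then produces, for \emph{every} $q>0$, an isometry $B_q$ of $\mv$ with $B_qAB_q^{-1}=q^2A$. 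Taking $p_q=\beta(q-1)/\alpha$ makes $\fh(qt+p_q)=q^{-2}\fh(t)$, and a direct check shows $(t,s,x)\mapsto(qt+p_q,\,q^{-1}s,\,B_qx)$ is then an isometry of (\ref{met}). In the coordinate $u=t+\beta/\alpha$ these maps act by $u\mapsto qu$ on the half-line $\{\eta=\alpha u>0\}$, giving transitivity in the $t$-direction; together with the translations $\partial_s$ and the Killing fields linear in $x$ that (\ref{met}) always carries because $\kx$ is quadratic in $x$ (a plane-wave structure, transitive in the $s$- and $\mv$-directions), the pseudogroup acts transitively, so $(\hm,\hg)$ is locally homogeneous. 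I expect the most laborious part to be confirming that these three families really exhaust all tangent directions — in particular exhibiting the $\mv$-transitivity from the plane-wave Killing fields and verifying that the scaling maps are genuine isometries — whereas the conceptual crux is the compactness input forcing $q_0\ne1$ and the passage to all $q$ via Lemma~\ref{babqa}.
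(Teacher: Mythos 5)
Your proposal is correct, and it is worth recording where it coincides with and where it departs from the paper's own proof. The equivalence of (ii) and (iii) is handled exactly as in the paper: a boundary point of the zero set of $\,\fh\,$ would produce a maximal interval on which $\,|\fh|^{-1/2}$ is affine yet blows up at a finite endpoint. The two substantive directions, however, follow genuinely different routes. For (i)$\Rightarrow$(ii) the paper quotes a Killing-field identity from \cite[formula (3.3)]{derdzinski-terek}, while you exploit the cancellation of $\,q\,$ in the transformation rule for $\,\dot\eta$, $\,\eta=|\fh|^{-1/2}\nh$, making $\,\dot\eta\,$ an invariant scalar of the local-isometry pseudogroup, hence constant by transitivity; this is self-contained, though you should note that a local isometry need not preserve the transversal orientation of $\,\mathcal{D}^\perp\nnh$, so $\,q<0\,$ is possible and then only $\,|\dot\eta|\,$ is invariant --- constancy of $\,|\dot\eta|$, continuity of $\,\dot\eta\,$ and connectedness of $\,\hm\,$ still force $\,\dot\eta\,$ to be constant, so nothing breaks. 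For (ii)$\Rightarrow$(i) you and the paper share the key mechanism --- compactness forces some $\,\gamma\in\Gm\,$ with $\,q\ne1$, and (\ref{pfw}) plus Lemma~\ref{babqa} then yield (\ref{inf}) --- but you reach it differently: the paper first needs Lemma~\ref{gnobs} to know that the levels of $\,t\,$ are connected, so as to treat $\,\fh\,$ as a function on an interval and exclude nontrivial translations, whereas your global identity $\,\eta=\alpha t+\beta\,$ (asserted without proof, but it follows in one line: (ii) gives $\,d\dot\eta=\ddot\eta\,dt=0$, so $\,\dot\eta\,$ is constant on the connected $\,\hm$) bypasses Lemma~\ref{gnobs} altogether. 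Past that point the paper normalizes $\,\fh(t)=\ve t^{-2}$ and cites \cite{derdzinski-78} for homogeneity of the resulting model, while you rebuild that homogeneity directly: your scaling maps are indeed isometries of (\ref{met}) (the check works precisely because $\,B_q$ is orthogonal and $\,B_qAB_q^{-1}=q^2A$), and transitivity along $\,\mathcal{D}^\perp$ comes from $\,\partial_s$ together with Killing fields of the form $\,w^i(t)\,\partial_i-\langle\dot w(t),x\rangle\,\partial_s$ (up to normalization of $\,s$) with $\,\ddot w=(\fh\,\mathrm{Id}+A)\,w$, which exist because $\,\kx\,$ is quadratic in $\,x$. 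The trade-off: the paper's argument is shorter but leans on two external references; yours is self-contained but owes the reader that plane-wave computation --- the step you flagged --- together with the explicit remark that all local charts of $\,\hm\,$ embed in one fixed model $\,I\times\bbR\times\mv$ (via the global parallel trivialization of $\,\mathcal{E}\,$ from Section~\ref{lh}), so that homogeneity of that single model really does yield local homogeneity of $\,\hm\nh$.
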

\begin{proof}First, (i) yields $\,\fh\ne0\,$ everywhere due to (\ref{rnz}) and 
(\ref{ric}). By Remark~\ref{relax}, $\,\g\,$ has, locally, the form
(\ref{pnz}) on some coordinate domain $\,\,U\nh$, and then, as shown in 
\cite[formula (3.3)]{derdzinski-terek-ne}, the existence of a Kil\-ling field 
$\,v\,$ on $\,\,U\hs$ with an every\-where-non\-zero component in the $\,t\,$ 
coordinate direction (which follows from local homogeneity) gives 
$\,(|\fh|^{-\nnh1/2})\hskip-1.2pt\ddot{\phantom o}\nh=0\,$ on $\,\,U\nh$,
that is, (ii), while (ii) trivially leads to (iii). Assuming (iii), we now
obtain (ii): $\,\fh\ne0\,$ everywhere, since otherwise we could choose local
coordinates as above around a boundary point of the zero set of $\,f\nh$, with
$\,t\,$ ranging over an open interval $\,I\hn'\nh$. (By (\ref{ric}) and
(\ref{rns}), $\,\fh\ne0\,$ somewhere.) A maximal open sub\-in\-ter\-val 
$\,I\hn'\nh'$ of
$\,I\hn'$ with $\,|\fh|>0\,$ on $\,I\hn'\nh'$ must equal $\,I\hn'\nh$, or else 
$\,I\hn'\nh'$ would have a finite endpoint lying in $\,I\hn'\nh$, at which 
$\,|\fh|^{-\nnh1/2}\nh$, being a linear function, would have a finite limit,
contrary to vanishing of $\,f\,$ at the endpoint due to maximality of 
$\,I\hn'\nh'\nh$. This contradiction shows that the zero set of $\,f\,$ has no
boundary points, proving (ii).

Suppose now that (ii) holds. According to (\ref{ivf}), the $\,1$-form 
$\hs|f|^{1/2}dt\hs$ satisfies the assumption, and hence the assertion, of
Lemma~\ref{gnobs}, so that the levels of $\,t\,$ are connected. We may thus
drop the word `locally' in 
(\ref{flt}) and, with $\,I\subseteq\bbR\,$ denoting the range of $\,t$,
view $\,f\,$ as a function $\,f:I\to\bbR\,$ which, since 
$\,(|\fh|^{-\nnh1/2})\hskip-1.2pt\ddot{\phantom o}\nh=0$ and
$\,f\,$ is nonconstant (Remark~\ref{relax}), has the form
$\,f(t)=\ve\hh(t-b)^{-\nh2}$ for some real constants $\,\ve\ne0\,$ and $\,b$. 
Thus, $\,I\nh\subseteq(-\infty,b)\,$ or $\,I\nh\subseteq(b,\infty)$. Now
(\ref{tqp}) leads to a homo\-mor\-phism
$\,\Gm\ni\gamma\mapsto(q,p)\in\mathrm{Af{}f}^+(\bbR)\,$ 
into the group of increasing af\-fine transformations of $\,\bbR\,$ mapping
$\,I\hs$ onto itself. As $\,I\nh\subseteq(-\infty,b)\,$ or
$\,I\nh\subseteq(b,\infty)$, the image of this homo\-mor\-phism cannot contain
a nontrivial translation, and must be infinite (or else it would be trivial,
causing $\,t\,$ to descend to a function without critical points on the
compact manifold $\,M=\hm\nnh/\hh\Gm$). Consequently, some $\,\gamma\in\Gm$
has $\,q\in(0,\infty)\smallsetminus\{1\}\,$ in (\ref{tqp}). Then  
(\ref{pfw}) and Lemma~\ref{babqa} imply that
\begin{equation}\label{inf}
\begin{array}{l}
\mathrm{for\ every\ }\,q\in(0,\infty)\smallsetminus\{1\}\,\mathrm{\ there\
 exists\ a\ linear}\\
\lr\hyp\mathrm{isometry\ }\,B:\mv\nh\nnh\to\mv\,\mathrm{\ with\
}\,B\nh AB^{-\nnh1}\hs=\,q^2\nh\nnh A\hh,
\end{array}
\end{equation}
where $\,A,\mv\hs$ and $\,\lr\,$ are a part of the data (\ref{dta})
representing the metric $\,\g\,$ in suitable coordinates around any point
of $\,\hm\nh$, (See the lines preceding (\ref{pfw}).) Also, instead of
$\,f(t)=\ve\hh(t-b)^{-\nh2}\nh$, we may require
that $\,f\,$ have the form
\begin{equation}\label{htf}
f(t)=\ve\hh t^{-\nh2}\,\mathrm{\ for\ all\ }\,t\in I\nh=(0,\infty)\hh,
\end{equation}
as we are free to modify our choice of $\,t\,$ via the 
af\-fine substitution replacing $\,t\,$ by $\,|t-b\hh|$. (The equality
$\,I\nh=(0,\infty)$, rather than just the inclusion $\,I\subseteq(0,\infty)$,
follows since an infinite group of increasing af\-fine transformations maps
$\,I\hs$ onto itself.)

Local homogeneity of $\,\g\,$ now follows: by (\ref{htf}), the
lo\-cal-co\-or\-di\-nate expression (\ref{pnz}) of $\,\g\,$ amounts to that
of the metric $\,g^P$ in \cite[top of p.\ 170]{derdzinski-78}, with our
coordinate $\,x^1\nh=t\,$ denoted there by $\,u^1\nh$. Our (\ref{inf}) now
becomes formula (10) in \cite[p.\ 172]{derdzinski-78} which, as stated there,
guarantees homogeneity of the metric $\,g^P$ on
$\,I\nh\times\bbR\times\mv\nh$, for $\,I\nh=(0,\infty)\,$ and
$\,\mv\nh=\bbR^{\hskip-.6ptn-2}\nnh$.

We have thus shown that (ii) implies (i), completing the proof.
\end{proof}
\begin{remark}\label{lorlh}A Lo\-rentz\-i\-an ECS manifold must have rank one
(see the Introduction) and is never 
locally homogeneous: (\ref{pfw}) with $\,q\ne\pm1\,$ implies nil\-po\-ten\-cy
of $\,A$, while in the Lo\-rentz\-i\-an case the fibre metric 
in $\,\mathcal{E}\hs$ induced by
$\,\g$, which corresponds to $\,\lr\,$ in (\ref{dta}), is positive definite,
and so the nonzero $\,\lr$-self-ad\-joint\ linear\ en\-do\-mor\-phism
$\,A:\mv\nh\nnh\to\mv\nh$, being di\-ag\-o\-nal\-iz\-able, cannot be
nil\-po\-tent.
\end{remark}

\section{Function spaces and the first real co\-ho\-mol\-o\-gy}\label{fs}
\setcounter{equation}{0}
We refer to a continuous $\,1$-form $\,\zeta$ on a manifold $\,M\hs$ as {\it 
closed\/} if it is locally exact in the sense that every point of $\,M\,$ has
a neighborhood $\,\,U\hh$ with $\,\zeta=d\psi\,$ on $\,\,U\hh$ for some
$\,C^1$ function $\,\psi:U\to\bbR$. 

Due to the universal coefficient theorem \cite[p.\ 378, Theorem 13.43]{lee},
for any manifold $\,M\,$ one has an isomorphic identification 
$\,H^1\nh(M\nh,\bbR)=\mathrm{Hom}\hh(\pi\nh_1\w\hn M\nh,\hs\bbR)$. 

As in the case of smooth $\,1$-forms, a closed continuous $\,1$-form
$\,\zeta\,$ on $\,M\hs$ thus gives rise to the
co\-ho\-mol\-o\-gy class $\,\lsq\hh\zeta\rsq\in H^1\nh(M\nh,\hs\bbR)\,$ which,
as a homo\-mor\-phism $\,\pi\nh_1\w\hn M\to\bbR$, assigns to each homotopy
class of piecewise $\,C^1$ loops at a fixed base point the integral of
$\,\zeta$ over a representative loop. Clearly, $\,\lsq\hh\zeta\rsq=0\,$ if and
only if $\,\zeta=d\psi\,$ for some $\,C^1$ function $\,\psi:M\to\bbR$.

The following lemma uses the notations $\,\pi,\hm\nh,t,f\nh,\Gm\hs$ introduced
in Section~\ref{an}.
\begin{lemma}\label{rvsof}Let\/ $\,\mathcal{F}\,$ be the real vector space
formed by all continuous functions\/ $\,\chi:\hm\to\bbR\,$ such that the\/
$\,1$-form\/ $\,\chi\nnh\,dt\,$ is closed and
\begin{equation}\label{feq}
\chi\circ\gamma=q^{-\nnh1}\chi\,\mathrm{\ whenever\ 
}\,\gamma\in\Gamma\,\mathrm{\ and \ }\,q\in(0,\infty)\,\mathrm{\ is\ the\
}\,q\hyp\mathrm{im\-a\-ge\ of\ }\,\gamma\hh.
\end{equation}
Then\/
$\,|f|^{1/2}\in\mathcal{F}\,$ and\/ $\,|\dot f|^{1/3}\in\mathcal{F}\nh$.
Furthermore, if\/ $\,\chi\in\mathcal{F}\nh$, then the\/
$\,1$-form\/ $\,\chi\nnh\,dt$ is $\,\Gm\nh$-in\-var\-i\-ant, and hence\/ 
$\,\pi$-pro\-ject\-a\-ble onto a closed\/ $\,1$-form
on\/ $\,M=\hm\nnh/\hh\Gm\nh$, also denoted by\/ $\,\chi\nnh\,dt$, which gives
rise to a linear operator
\begin{equation}\label{lop}
\mathcal{F}\ni\chi\,\mapsto\,\lsq\chi\nnh\,dt\rsq\in H^1\nh(M\nh,\hs\bbR)\hh.
\end{equation}
\end{lemma}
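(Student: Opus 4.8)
The plan is to check the three defining properties of membership in $\mathcal{F}$ --- continuity, closedness of $\chi\, dt$, and the homogeneity (\ref{feq}) --- for the two explicit functions, and then to read off the invariance and the cohomology operator formally. Everything rests on the scaling rules (\ref{fcg}).

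First I would record how $f$ and $\dot f$ transform under $\gamma\in\Gm$ with $q$-image $q$: by (\ref{ric}) and the first line of (\ref{fcg}) one has $f\circ\gamma=q^{-2}f$, and the second line of (\ref{fcg}) (with exponent $a=-2$) then gives $\dot f\circ\gamma=q^{-3}\dot f$. The exponents $1/2$ and $1/3$ in the statement are exactly the ones that convert these into the common factor $q^{-1}$: since $q>0$, taking the appropriate roots yields $|f|^{1/2}\circ\gamma=q^{-1}|f|^{1/2}$ and $|\dot f|^{1/3}\circ\gamma=q^{-1}|\dot f|^{1/3}$, which is precisely (\ref{feq}) for both functions.

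Next comes the closedness of $|f|^{1/2}dt$ and $|\dot f|^{1/3}dt$, and this is the only step requiring any care. The subtlety is that $|f|^{1/2}$ and $|\dot f|^{1/3}$ may fail to be $C^1$ at the zeros of $f$ or $\dot f$, so one cannot argue by differentiating; instead one must produce local $C^1$ primitives, which is all the definition of \emph{closed} for a continuous $1$-form demands. Both functions are nonetheless continuous --- $f$ is smooth and, by (\ref{flt}), locally a function of $t$, so $\dot f=df/dt$ is a well-defined smooth function, and the roots are continuous --- and each is itself locally a function of $t$. Hence near any point one writes, say, $|f|^{1/2}=g\circ t$ with $g$ continuous on an interval; then $\Psi\circ t$, where $\Psi(\tau)=\int^{\tau}g$, is $C^1$ and satisfies $d(\Psi\circ t)=(g\circ t)\,dt=|f|^{1/2}dt$. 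The identical argument handles $|\dot f|^{1/3}dt$, completing the verification that $|f|^{1/2},|\dot f|^{1/3}\in\mathcal{F}$.

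Finally, for an arbitrary $\chi\in\mathcal{F}$ the $\Gm$-invariance of $\chi\, dt$ is a one-line computation: for $\gamma\in\Gm$ with $q$-image $q$, the first line of (\ref{fcg}) gives $\gamma^*dt=q\,dt$, so $\gamma^*(\chi\, dt)=(\chi\circ\gamma)\,\gamma^*dt=(q^{-1}\chi)(q\,dt)=\chi\, dt$ by (\ref{feq}). Because $\pi$ is a local diffeomorphism and local exactness is preserved under local diffeomorphisms, the invariant form descends to a closed continuous $1$-form on $M=\hm/\Gm$, and the assignment $\chi\mapsto\lsq\chi\, dt\rsq$ is linear since both $\chi\mapsto\chi\, dt$ and passage to the cohomology class are linear. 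The one genuinely non-bookkeeping point, as noted, is the regularity issue in the closedness step.
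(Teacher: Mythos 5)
Your proposal is correct and follows the same route as the paper, which disposes of the lemma in one line as ``a trivial consequence of (\ref{fcg})'': the scaling computation, the invariance $\gamma^*(\chi\,dt)=(q^{-1}\chi)(q\,dt)=\chi\,dt$, and the descent along $\pi$ are exactly what that remark compresses. Your extra care in producing local $C^1$ primitives of $|f|^{1/2}dt$ and $|\dot f|^{1/3}dt$ via (\ref{flt}) (writing the coefficient as $g\circ t$ and integrating $g$) is precisely the detail the paper treats as implicit, since the roots need not be differentiable at zeros of $f$ or $\dot f$.
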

This is a trivial consequence of (\ref{fcg}).

\section{Existence of special functions}\label{es}
\setcounter{equation}{0}
\begin{theorem}\label{admts}Let\/ $\,(M\nh,\g)\,$ be a compact rank-one 
ECS manifold such that\/ $\,\mathcal{D}^\perp\hskip-2pt$ is 
trans\-ver\-sal\-ly o\-ri\-ent\-able. If\/ $\,(M\nh,\g)$ is not locally
homogeneous, then there exists a nonconstant\/ $\,C^1\nnh$ function\/  
$\,\mu:M\to\bbR$, constant along\/ $\,\mathcal{D}^\perp\nnh$.
\end{theorem}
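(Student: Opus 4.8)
The plan is to reformulate the assertion in terms of the operator (\ref{lop}) of Lemma~\ref{rvsof} and to argue by contradiction. I would first note that producing $\mu$ is the same as exhibiting a nonzero element of the kernel of (\ref{lop}). Indeed, by (\ref{ker}) a $C^1$ function constant along $\mathcal{D}^\perp$ has differential a multiple of $dt$; so a nonconstant such $\mu$ on $M$ pulls back to a $\Gm$-invariant function on $\hm$ with $d(\mu\circ\pi)=\chi\,dt$ for a continuous $\chi\not\equiv0$ which, by (\ref{fcg}), obeys (\ref{feq}), i.e.\ $\chi\in\mathcal{F}$, and satisfies $\lsq\chi\,dt\rsq=0$. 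Conversely, any $\chi\in\mathcal{F}$ lying in the kernel of (\ref{lop}) integrates to a $C^1$ function $\mu$ on $M$ with $d\mu=\chi\,dt$, constant along $\mathcal{D}^\perp$ and nonconstant since $\chi\not\equiv0$ while $dt$ has no zeros. Thus it suffices to show that (\ref{lop}) is \emph{not} injective, and I assume the opposite.

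Under that assumption $\mathcal{F}$ embeds into $H^1(M,\bbR)$, which is finite-dimensional because $M$ is compact; set $m=\dim\mathcal{F}$. Lemma~\ref{rvsof} places $|f|^{1/2}$ and $|\dot f|^{1/3}$ in $\mathcal{F}$, and these are linearly independent: the only dependence compatible with both functions being nonzero is a relation $|\dot f|^{1/3}=c\,|f|^{1/2}$ with $c\ne0$, which cubes to $|\dot f|=c^3|f|^{3/2}$, so that $(|f|^{-1/2})\dot{\phantom o}$ is constant on each component of $\{f\ne0\}$ and $(|f|^{-1/2})\ddot{\phantom o}=0$ wherever $f\ne0$, forcing local homogeneity by Theorem~\ref{lchom}, contrary to hypothesis; hence $m\ge2$. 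Since (\ref{feq}) is preserved by $\psi\mapsto|\psi|$ and by the $m$-argument operation $\varPi(\psi_1,\dots,\psi_m)=|\psi_1\ldots\psi_m|^{1/m}$ (both sending functions of $t$ to nonnegative continuous functions of $t$, and $\varPi$ having the same zeros as the product), $\mathcal{F}$ is closed under both, so Lemma~\ref{basis} yields a basis $\chi_1,\dots,\chi_m$ of $\mathcal{F}$ whose positivity sets $X_j$ are pairwise disjoint, with $\chi_j=0$ off $X_j$.

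To finish I would localize the two distinguished functions on these sets. Expanding $|f|^{1/2}=\sum_i a_i\chi_i$ and $|\dot f|^{1/3}=\sum_i b_i\chi_i$ and restricting to $X_j$, where every $\chi_i$ with $i\ne j$ vanishes, gives $|f|^{1/2}=a_j\chi_j$ and $|\dot f|^{1/3}=b_j\chi_j$ on $X_j$, so the two functions are proportional there. When $a_j>0$ we have $f\ne0$ throughout $X_j$ and $|\dot f|^{1/3}=(b_j/a_j)|f|^{1/2}$, whence $(|f|^{-1/2})\ddot{\phantom o}=0$ on $X_j$ exactly as above (the case $b_j=0$, where $f$ is constant on $X_j$, being immediate); when $a_j=0$, $f$ vanishes on $X_j$, and $f$ also vanishes on the common zero set $X_0$, where $|f|^{1/2}=0$. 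Therefore $(|f|^{-1/2})\ddot{\phantom o}=0$ at every point with $f\ne0$, so Theorem~\ref{lchom} makes $(M,\g)$ locally homogeneous --- the desired contradiction. Consequently (\ref{lop}) is not injective, and a function $\mu$ as required exists.

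The main difficulty I anticipate is the possible presence of zeros of $f$. If $f$ were nowhere zero, one could simply take $\mu$ to be the $\Gm$-invariant, hence $M$-defined, function $(|f|^{-1/2})\dot{\phantom o}$, which is nonconstant precisely because local homogeneity fails; the zeros of $f$, at which $|f|^{-1/2}$ diverges, obstruct this and are the reason for working instead with the everywhere-finite generators $|f|^{1/2}$ and $|\dot f|^{1/3}$ and for invoking Lemma~\ref{basis}, whose disjoint-support basis is exactly what lets the local-homogeneity ODE be read off on each piece $X_j$ and then reassembled over all of $\{f\ne0\}$.
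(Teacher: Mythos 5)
Your proposal is correct and takes essentially the same route as the paper's proof: the paper packages the identical ingredients (Lemma~\ref{rvsof}, Lemma~\ref{basis} with the geometric-mean operation, Theorem~\ref{lchom}, and finite-dimensionality of $H^1(M,\mathbb{R})$ by compactness of $M$) as a dichotomy --- $\dim\mathcal{F}<\infty$ forces local homogeneity, while $\dim\mathcal{F}=\infty$ forces noninjectivity of (\ref{lop}) and hence the existence of $\mu$ --- which is just your contradiction argument rearranged. Your added details (linear independence of $|f|^{1/2}$ and $|\dot f|^{1/3}$, and the explicit computation giving $(|f|^{-1/2})\ddot{\phantom o}=0$ wherever $f\ne0$) merely expand steps the paper treats as immediate.
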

\begin{proof}We will show that,
for $\,\mathcal{F}\,$ defined in Lemma~\ref{rvsof}, either 
$\,\dim\mathcal{F}\nh<\infty$ and $\,(M\nh,\g)\,$ is locally 
homogeneous, or $\,\dim\mathcal{F}\nh=\infty\,$ and such $\,\mu\,$ exists.

First, let $\,\dim\mathcal{F}\nh<\infty$. In this case, even without using
compactness of $\,M\nh$, we can apply Lemma~\ref{basis} to $\,X=M\nh$, our 
vector space $\,\mathcal{F}\nh$, and the $\,m$-ar\-gu\-ment operation 
$\,\varPi\,$ sending $\,\psi_1\w,\dots,\psi_m\w$ to the
product of powers of the absolute values
$\,|\psi_1\w|,\dots,|\psi_m\w|\,$ with any fixed
positive exponents adding up to $\,1\,$ (for instance, the geometric mean of
the absolute values). As $\,|f|^{1/2},|\dot f|^{1/3}$ lie in
$\,\mathcal{F}\,$ (see Lemma~\ref{rvsof}), on each of the open sets
$\,X\hskip-2pt_j\w\subseteq M\,$ obtained in Lemma~\ref{basis},
$\,|f|^{1/2}$ and $\,|\dot f|^{1/3}$ are constant multiples of the $\,j\hh$th
function $\,\chi\nnh_j\w$ from the basis
$\,\chi_1\w,\dots,\chi_m\w$ of $\,\mathcal{F}\nh$, which
clearly gives $\,(|\fh|^{-\nnh1/2})\hskip-1.2pt\ddot{\phantom o}\nh=0\,$
wherever $\,f\ne0$. (Note that $\,|f|^{1/2}\nh$, being a linear combination
of the functions $\,\chi_1\w,\dots,\chi_m\w\nh$,
vanishes on their simultaneous zero set
$\,X\nh\smallsetminus\bigcup_{j=1}^mX\hskip-2pt_j\w$.) Local homogeneity is
now immediate from Theorem~\ref{lchom}.

Finally, suppose that $\,\dim\mathcal{F}\nh=\infty$. Due to compactness of
$\,M\nh$, (\ref{lop}) is noninjective and we may fix
$\,\chi\in\mathcal{F}\smallsetminus\{0\}\,$ lying in its kernel, so that
$\,\chi\nnh\,dt\,$ treated as a $\,1$-form on $\,M\,$ equals 
$\,d\mu\,$ for some (nonconstant) $\,C^1$ function $\,\mu:M\to\bbR$.
By (\ref{ker}), this completes the proof.
\end{proof}
\begin{remark}\label{sards}
Sard's theorem \cite[Theorem 1.3 on p.\ 69]{hirsch} normally applies to 
$\,C^k$ mappings from an $\,n$-man\-i\-fold into an $\,m$-man\-i\-fold, with
$\,k\ge\mathrm{max}\hs(n-m+1,1)$, guaranteeing that the critical values form a
set of zero measure. In our case, even though $\,\mu:M\to\bbR$ is only of
class $\,C^1\nnh$, and $\,M\,$ can have any dimension $\,n\ge4$, the same
conclusion remains valid, and so, due to compactness of $\,M\nh$,
\begin{equation}\label{rng}
\mu(M)\,\mathrm{\ contains\ an\ open\ interval\ consisting\ of\ regular\
values\ of\ }\,\mu\hh,
\end{equation}
$\mu(M)\,$ being being the range of $\,\mu$. In fact,
(\ref{wnt}) gives $\,dt\ne0\,$ everywhere in $\,\hm\nh$, and so 
$\,M\,$ is covered by
finitely many connected open sets $\,\,U\,$ each of which can be
dif\-feo\-mor\-phic\-al\-ly identified with an open set
$\,\,\widehat{U}\subseteq\hm\,$ such that the levels of 
$\,t:\widehat{U}\to\bbR\,$ are all connected. This turns $\,\mu$ restricted 
to $\,\,U\,$ into a function of $\,t$, allowing us to use Sard's
theorem as stated above for $\,k=n=m=1$.
\end{remark}

\section{Holonomy of compact leaves}\label{hc} 
\setcounter{equation}{0}
Let $\,(M\nh,\g)\,$ be a rank-one ECS manifold. Its rank-one 
Ol\-szak distribution $\,\mathcal{D}$ (see the Introduction), being
parallel, carries a linear connection induced from the Le\-vi-Ci\-vi\-ta
connection of $\,\g$, and this connection is flat since, locally, 
$\,\mathcal{D}\,$ is spanned by the parallel gradient $\,\nabla\hn t$, cf.\
(\ref{wnt}).

For any leaf $\,L\,$ of $\,\mathcal{D}^\perp\nnh$, this flat connection 
gives rise to one in the line bundle
$\,\mathcal{D}\hskip-2pt_L\w$ arising as the restriction of $\,\mathcal{D}\,$
to $\,L\,$ and, consequently, also to what we call
\begin{equation}\label{tfl}
\mathrm{the\ flat\ connection\ in\ the\ line\ bundle\ }\,\dla\mathrm{\ dual\
to\ }\,\mathcal{D}\hskip-2pt_L\w\hh.
\end{equation}
Note that $\,\dla$ is canonically isomorphic to the normal
bundle of $\,L\,$ in $\,M\nh$, since $\,\mathcal{D}$ 
is isomorphic to the dual of $\,T\nh M/\mathcal{D}^\perp\nnh$, via the
isomorphism assigning to $\,v\in\mathcal{D}\nnh_x\w$, at any 
$\,x\in M\nh$, the linear functional $\,\g_x\w(v,\,\cdot\,):\txm\nh\to\bbR$, 
vanishing on $\,\mathcal{D}\nnh_x^\perp\nnh$.

The next result remains valid without transversal orientability of
$\,\mathcal{D}^\perp\nnh$ or compactness of $\,M\nh$. We make these
assumptions here just to simplify the discussion.
\begin{theorem}\label{hlncl}Let\/ $\,L\,$ be a compact leaf of\/
$\,\mathcal{D}^\perp\hskip-2pt$ in a compact rank-one ECS manifold\/
$\,(M\nh,\g)$. If\/ $\,\mathcal{D}^\perp\hskip-2pt$ is trans\-ver\-sal\-ly
o\-ri\-ent\-able, then some neighborhood\/ $\,\,U$ of\/ $\,L\,$ in\/
$\,M\,$ can be dif\-feo\-mor\-phic\-al\-ly identified with a neighborhood\/
$\,\,U'$ of the zero section\/ $\,L\,$ in the line bundle\/
$\,\dla$ so as to make the distribution\/
$\hs\mathcal{D}^\perp\hskip-2pt$ on\/ $\,\,U\nh$ correspond to the restriction
to\/ $\,\,U'$ of the horizontal distribution of the flat connection in\/
$\,\dla$.
\end{theorem}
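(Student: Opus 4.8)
The plan is to pass to the universal cover $\,\hm\nh$, where $\,t\,$ is a genuine function with $\,\mathcal{D}^\perp\nh=\mathrm{Ker}\,dt\,$ by (\ref{ker}), and to present both the foliation $\,\mathcal{D}^\perp\nh$ near $\,L\,$ and the horizontal foliation of $\,\dla\,$ near its zero section as quotients, by one and the same group, of model objects over a single leaf, intertwined by an explicit equivariant diffeomorphism. First I would fix a connected component $\,\widehat L\,$ of $\,\pi^{-1}(L)$; it is a leaf of $\,\mathcal{D}^\perp\nh$ on which $\,t\,$ equals some constant $\,c$, and $\,\pi\,$ restricts to a covering $\,\widehat L\to L\,$ with deck group $\,\Lambda=\{\gamma\in\Gm:\gamma(\widehat L)=\widehat L\}$, so that $\,L=\widehat L/\Lambda$. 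For $\,\gamma\in\Lambda$, the affine relation (\ref{tqp}), $\,t\circ\gamma=qt+p$, together with $\,\gamma(\widehat L)=\widehat L$, forces $\,qc+p=c$, whence $\,t\circ\gamma-c=q\hs(t-c)$; thus $\,t-c\,$ is a transverse coordinate on which $\,\Lambda\,$ acts \emph{linearly}, by multiplication by the $\,q$-image $\,q=q(\gamma)$.

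Next I would set up the linear model. Over $\,\widehat L\,$ the pullback of $\,\dla\,$ is trivialized, as $\,\widehat L\times\bbR$, by the parallel section $\,\xi\,$ of (\ref{dua}), with fibre coordinate $\,\eta$; since $\,\xi\,$ is parallel for the flat connection (\ref{tfl}), the horizontal leaves—images of the parallel sections $\,c\hs\xi$—are exactly the slices $\,\{\eta=\mathrm{const}\}$. Using (\ref{gpw}) and (\ref{dua}), the bundle automorphism covering $\,\gamma\in\Lambda\,$ sends $\,\xi\,$ to $\,q\hs\xi$, so $\,\Lambda\,$ acts on $\,\eta\,$ by $\,\eta\mapsto q\hs\eta$—the \emph{same} linear action as on $\,t-c$. (This is precisely why the \emph{dual} bundle $\,\dla\cong T\nh M|_L/\mathcal{D}^\perp\nh$ appears: its fibre coordinate then transforms like $\,dt$, by $\,q$, rather than by $\,q^{-\nnh1}$.) Transversal orientability furnishes a vector field on $\,M\,$ nowhere tangent to $\,\mathcal{D}^\perp\nh$; its $\,\pi$-pullback is a $\,\Gm\nh$-invariant such field $\,v\,$ on $\,\hm\nh$, with flow $\,(\taw,y)\mapsto\phi(\taw,y)$, and I denote by $\,r\,$ the retraction sending each $\,y\,$ near $\,\widehat L\,$ to the point where the integral curve of $\,v\,$ through $\,y\,$ meets $\,\widehat L$. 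Since the flow commutes with $\,\Gm\nh$, the map $\,r\,$ is $\,\Lambda$-equivariant.

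Finally I would define, on a neighbourhood of $\,\widehat L$, the map $\,\widehat\Phi(y)=(r(y),\hs t(y)-c)\in\widehat L\times\bbR$. By the two previous paragraphs $\,\widehat\Phi\,$ is $\,\Lambda$-equivariant; and since $\,r\,$ restricts to the identity of $\,\widehat L\,$ while $\,t-c\,$ has nonzero $\,v$-derivative along $\,\widehat L$, its differential is block-triangular with invertible blocks, making $\,\widehat\Phi\,$ a local diffeomorphism carrying $\,\widehat L\,$ to the zero section. As $\,\eta\circ\widehat\Phi=t-c\,$ depends on $\,t\,$ alone, $\,\widehat\Phi\,$ sends $\,\mathcal{D}^\perp\nh=\mathrm{Ker}\,dt\,$ to the horizontal slices $\,\{\eta=\mathrm{const}\}$. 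Passing to $\,\Lambda$-quotients then yields the desired $\,\Phi:U\to U'\,$ matching $\,\mathcal{D}^\perp\nh$ with the horizontal distribution of $\,\dla$.

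The main obstacle is upgrading the \emph{local} diffeomorphism $\,\widehat\Phi\,$ to an honest diffeomorphism of a neighbourhood that descends to $\,M\nh$: the leaf $\,\widehat L\,$ is noncompact, so uniformity is not immediate. I would handle this exactly as in Remark~\ref{flows}, applied downstairs to the compact leaf $\,L\,$ and the field $\,v$, which makes $\,\phi:(-\ve,\ve)\times L\to M\,$ an embedding for small $\,\ve>0$; pulling this back gives a $\,\Lambda$-invariant tubular neighbourhood of $\,\widehat L\,$ on which $\,\widehat\Phi\,$ is injective. Shrinking if necessary, and invoking Remark~\ref{lcdif} for surjectivity onto a neighbourhood of the zero section, completes the descent to $\,\Phi$.
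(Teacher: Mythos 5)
Your proposal is correct, and at its core it runs on the same cancellation as the paper's own proof: under a deck transformation $\,\gamma\,$ with $\,q$-image $\,q$, differences of $\,t\,$ scale by $\,q\,$ (by (\ref{tqp})), while the fibre coordinate of $\,\mathcal{D}^*$ relative to the parallel frame $\,\xi\,$ of (\ref{dua}) also scales by $\,q\,$ (by (\ref{gpw})), so the transverse coordinate $\,t-c\,$ can be consistently read as a fibre coordinate in $\,\dla$. Indeed, your $\,\widehat\Phi$, pushed down to $\,M\nh$, is literally the paper's map (\ref{ptx}), $\,\varPsi(\phi\hs(\taw,x))=[\hh t(\hat\phi\hs(\taw,y))-t(y)]\,\xi_y\w\circ(d\pi\nh_y\w)^{-1}$. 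What differs is the scaffolding. The paper never singles out a component $\,\widehat L\,$ of $\,\pi^{-1}(L)\,$ or its stabilizer $\,\Lambda$: it defines $\,\varPsi\,$ directly on $\,U=\phi\hs((-\ve,\ve)\times L)\,$ using an arbitrary lift $\,y\in\pi^{-1}(x)\,$ and checks independence of that choice -- your equivariance computation in disguise -- so it needs none of the covering-space facts your route must assert (that components of $\,\pi^{-1}(L)\,$ are leaves, that $\,\widehat L\to L\,$ is a regular covering with deck group $\,\Lambda$, and that $\,(\widehat L\times\bbR)/\Lambda\,$ with the slice foliation is $\,\dla\,$ with its flat horizontal foliation). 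Those facts are true and standard, but they are extra statements to prove. What your route buys is a cleaner endgame: since $\,\eta\circ\widehat\Phi=t-c\,$ and $\,\mathcal{D}^\perp=\mathrm{Ker}\,dt\,$ on $\,\hm\,$ by (\ref{ker}), the correspondence of the two distributions is an identity of kernels, whereas the paper must argue separately, via Remark~\ref{flmap}, that leaves are carried to local parallel sections. Both arguments lean on Remark~\ref{flows} applied downstairs to the compact leaf $\,L$; your observation that the uniformity has to be arranged on $\,M\,$ rather than on the noncompact $\,\widehat L\,$ is exactly right, and injectivity of $\,\widehat\Phi\,$ on the lifted tube $\,\hat\phi\hs((-\ve,\ve)\times\widehat L)\,$ does follow, since $\,r\,$ separates flow lines while $\,t\,$ is strictly monotone along each of them ($\,dt(\hat v)\ne0$).

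One small correction: Remark~\ref{lcdif} is not the right tool for producing $\,\,U'\nh$. It concerns locally diffeomorphic maps \emph{from compact manifolds}, and nothing in sight here is compact. You also do not need it: $\,\widehat\Phi\,$ is an open map (being a local diffeomorphism), so its image is a $\,\Lambda$-invariant open set containing the zero section $\,\widehat L\times\{0\}$, and its $\,\Lambda$-quotient is a neighborhood of the zero section in $\,\dla$ -- which is all the theorem requires of $\,\,U'\nh$.
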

\begin{proof}Let $\,\,U=\phi\hs((-\ve,\ve)\times L)\,$ for the flow $\,\phi\,$
of a fixed $\,C^\infty\nnh$ vector field $\hs v\hs$ on $\,M\hs$ which is
nowhere
tangent to $\,\mathcal{D}^\perp\nnh$, and for $\,\ve\,$ chosen as in
Remark~\ref{flows}. We define the required 
dif\-feo\-mor\-phism $\,\varPsi:U\to U'$ by declaring how  
$\,\varPsi(\phi\hs(\taw,x))\,$ depends on $\,(\taw,x)\in(-\ve,\ve)\times L$, 
cf.\ Remark~\ref{flows}. For any point
$\,y\in\pi^{-\nnh1}(x)$, with $\,\pi\,$ as in (\ref{ucp}), the flow
$\,\hat\phi\,$ of the vector field $\,\hat v\,$ on $\,\hm\,$ projecting 
under $\,\pi\,$ onto $\,v$, and the parallel section $\,\xi\,$ of the line
bundle $\,\mathcal{D}^*$ over $\,\hm\,$ appearing in (\ref{dua}), we set
\begin{equation}\label{ptx}
\varPsi(\phi\hs(\taw,x))\,
=\,[\hh t(\hat\phi\hs(\taw,y))-t(y)]\,\xi_y\w\nh\circ(d\pi\nh_y\w)^{-\nnh1}\,
\in\,\mathcal{D}_{\nnh x}^*\,\subseteq\,\dla.
\end{equation}
Here $\,\xi_y\w\in\mathcal{D}_{\nnh y}^*$ is a linear functional on 
$\,\mathcal{D}_{\nnh y}\w\subseteq\tyhm\,$ and we compose it with the inverse
of the isomorphism $\,d\pi\nh_y\w:\tyhm\to\txm\nh$, so that the result is 
a functional on $\,\mathcal{D}_{\nnh x}\w\subseteq\txm\nh$, which we then 
multiply by the scalar $\,t(\hat\phi\hs(\taw,y))-t(y)$. The fibres
$\,\mathcal{D}_{\nnh x}$ of the line bundle $\,\dla$ over $\,L\,$ are
treated here as pairwise disjoint subsets of the total space, also denoted by 
$\,\dla$. 

First, (\ref{ptx}) does not depend on the choice of $\,y\in\pi^{-\nnh1}(x)$.
In fact, replacing $\,y$ by $\,\gamma(y)$, with $\,\gamma\in\Gm\nh$, cf.\ 
(\ref{ucp}), we get the same right-hand side in (\ref{ptx}), since
\[
t(\hat\phi\hs(\taw,\gamma(y)))\nh-\nh t(\gamma(y))\nh
=\nh q\hs[\hh t(\hat\phi\hs(\taw,y))\nh-\nh t(y)],\hskip7pt
\xi_{\gamma(y)}\w\nh\circ(d\pi\nh_{\gamma(y)}\w)^{-\nnh1}\nh
=\nh q^{-\nnh1}\xi_y\w\nh\circ(d\pi\nh_y\w)^{-\nnh1}
\]
for some real $\,q>0$. Namely, as $\,\gamma\,$ leaves $\,\hat v\,$
invariant, $\,\hat\phi\hs(\taw,\gamma(y))=\gamma(\hat\phi\hs(\taw,y))$, and so 
the relation $\,t\circ\gamma=qt+p\,$ in (\ref{tqp}) yields the first equality
displayed above. At the same time (\ref{gpw}) and (\ref{dua}) give
$\,q^{-\nnh1}\xi=\gamma^*\xi\,$ which, since $\,\pi=\pi\circ\gamma$, leads 
to $\,q^{-\nnh1}\xi_y\w=\xi_{\gamma(y)}\w\nnh\circ d\gamma\nh_y$ and 
$\,d\pi\nh_y\w=d\pi\nh_{\gamma(y)}\w\nh\circ\hh d\gamma\nh_y\w$, so that 
$\,q^{-\nnh1}\xi_y\w\nh\circ(d\pi\nh_y\w)^{-\nnh1}
=\xi_{\gamma(y)}\w\nh\circ(d\pi\nh_{\gamma(y)}\w)^{-\nnh1}\nh$.

Smoothness of $\,\varPsi\,$ is obvious if one uses $\,y\,$ depending on
$\,x\,$ via a local inverse of $\,\pi$. 
Also, $\,\varPsi\,$ is a
fibre-pre\-serv\-ing mapping from $\,\,U=\phi\hs((-\ve,\ve)\times L)\,$
(viewed, when identified with $\,(-\ve,\ve)\times L$, as a trivial bundle with 
one-di\-men\-sion\-al fibres) into the line bundle $\,\dla$, operating as the
identity on the base manifold $\,L$, and constituting an embedding of each
fibre separately: by (\ref{ker}),
$\,|\hs d\hs[\hh t(\hat\phi\hs(\taw,y))]/dt|>0$. 
This makes $\,\varPsi\,$ itself an embedding.

Finally, with $\,y\,$ near $\,y\nh_*\w$ smoothly depending on $\,x\in L\,$
near some fixed $\,x_*\w$ as before, 
$\,t(y)=t(y\nh_*\w)$ is constant, by (\ref{ker}). Requiring that an
assignment $\,y\mapsto\taw(y)$ give $\,t(\hat\phi\hs(\taw(y),y))=t_*\w$
for a constant $\,t_*\w$ near $\,t(y\nh_*\w)$, and so 
$\,y\mapsto\lambda(y)=\hat\phi\hs(\taw(y),y)$ sends a neighborhood of
$\,y\nh_*\w$ in the leaf $\,L\nh_y\w$ into a single leaf, yields
(Remark~\ref{flmap}) a dif\-feo\-mor\-phism $\,\lambda\,$ between neighborhoods
of $\,y\nh_*\w$ and $\,\lambda(y\nh_*\w)\,$ in the two leaves. By 
(\ref{ptx}), $\,\varPsi(\pi(\lambda(y)))
=[\hh t_*\w\nnh-t(y\nh_*\w)]\,\xi_y\w\nh\circ(d\pi\nh_y\w)^{-\nnh1}$ which,
due to constancy of $\,t_*\w\nnh-t(y\nh_*\w)$, is a parallel local section of
$\,\dla$. This completes the proof.
\end{proof}
The holonomy representation of (\ref{tfl}) assigns to each $\,x\in L$ and each
homotopy class of piecewise $\,C^1$ loops at $\,x\,$ in $\,L\,$ a linear
automorphism of the line $\,\mathcal{D}\nnh_x\w$, that is, the multiplication
by some $\,q\in\bbR\smallsetminus\{0\}$. Since this is a multiple of the
identity, the {\it holonomy group\/} $\,H\hskip-2.5pt_L\w$ of the flat
connection (\ref{tfl}) in the line bundle $\,\dla$ over $\,L$, formed by all
these $\,q$, does not depend on $\,x$. Obviously,
\begin{equation}\label{hlg}
\mathrm{the\ holonomy\ group\ }\,H\hskip-2.5pt_L\w\mathrm{\ is\ either\
infinite,\ or\ trivial.}
\end{equation}
\begin{theorem}\label{eithr}In any rank-one ECS manifold\/ $\,(M\nh,\g)\,$ such
that\/ $\,\mathcal{D}^\perp\hskip-2pt$ is 
trans\-ver\-sal\-ly o\-ri\-ent\-able,  
condition\/ {\rm(\ref{cpl})} holds for\/
$\,\mathcal{V}=\mathcal{D}^\perp\nnh$. In addition, the two possibilities 
named in\/ {\rm(\ref{cpl})} correspond precisely to the two cases of\/
{\rm(\ref{hlg})}.
\end{theorem}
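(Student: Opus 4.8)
The plan is to combine the local normal form for $\mathcal{D}^\perp$ near a compact leaf, provided by Theorem~\ref{hlncl}, with the holonomy dichotomy~(\ref{hlg}). Fix a compact leaf $L$ of $\mathcal{V}=\mathcal{D}^\perp$. By Theorem~\ref{hlncl} (which, as noted there, does not actually require compactness of $M$), a neighborhood $U$ of $L$ may be identified with a neighborhood $U'$ of the zero section in the flat line bundle $\dla$ so that $\mathcal{D}^\perp$ becomes the horizontal foliation of the flat connection. The whole question thus reduces to describing the horizontal leaves of a flat line bundle near its zero section in terms of the holonomy group $H_L\subseteq(0,\infty)$, the inclusion into $(0,\infty)$ reflecting transversal orientability. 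By~(\ref{hlg}) this group is either trivial or infinite, and I would show that these two cases yield, respectively, the second and the first possibility in~(\ref{cpl}).

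In the trivial case $H_L=\{1\}$, the flat bundle $\dla$ is flatly trivial, so the horizontal foliation of $U'$ is the product foliation by the constant sections, each of which is a compact leaf diffeomorphic to $L$. Carrying this back through $U\cong U'$ and rescaling the fibre interval to $\bbR$ presents a neighborhood of $L$ as a union of compact leaves, identified with $\bbR\times L$ in such a way that $\mathcal{V}$ is the $L$-factor foliation — exactly the second possibility in~(\ref{cpl}).

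The substantive case is $H_L$ infinite, where the goal is to prove that every leaf meeting $U\smallsetminus L$ is noncompact. A point of $U\smallsetminus L$ corresponds, under $U\cong U'$, to a nonzero vector in the fibre of $\dla$ over some $x_0\in L$, i.e.\ to a value $r_0\neq0$ on the transversal $T=\{x_0\}\times(-\epsilon,\epsilon)$. In the line-bundle model the first-return (holonomy) pseudogroup on $T$ is linear: a loop $\gamma$ at $x_0$ acts by $r\mapsto q_\gamma r$ with $q_\gamma\in H_L$. As $H_L$ is infinite, some $q_\gamma\neq1$, and, after possibly replacing $\gamma$ by its inverse, we obtain a contracting holonomy germ $h\colon r\mapsto qr$ with $0<q<1$. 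Iterating $h$ on $r_0$ yields infinitely many distinct points $q^nr_0$ lying on a single leaf $L'$ of $\mathcal{D}^\perp$ and converging to $0\in T$, the latter being a point of $L$. Since $L'$ is a leaf distinct from $L$, we have $L'\cap L=\emptyset$, so a compact (hence closed) $L'$ could not contain this limit; the contradiction forces $L'$ to be noncompact.

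Assembling the two cases shows that each compact leaf $L$ satisfies precisely one alternative of~(\ref{cpl}), so~(\ref{cpl}) holds for $\mathcal{V}=\mathcal{D}^\perp$, with trivial holonomy producing the second alternative and infinite holonomy the first; this is the asserted correspondence with~(\ref{hlg}). I expect the infinite-holonomy step to be the main obstacle: one must certify that the accumulating iterates $q^nr_0$ genuinely lie on one global leaf $L'$ and accumulate at a point of $L$. The linearity of the holonomy and the fact that the contracting germ keeps the tail of the orbit inside the transversal $(-\epsilon,\epsilon)$, and hence inside $U$ on a single leaf, are exactly what allow the closedness of a hypothetical compact $L'$ to be played against its disjointness from $L$.
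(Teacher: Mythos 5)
Your strategy coincides with the paper's: apply Theorem~\ref{hlncl} to identify a neighborhood of a compact leaf $L$ with a neighborhood $U'$ of the zero section of the flat line bundle $\dla$, then split along the dichotomy (\ref{hlg}), with trivial holonomy yielding the compact product alternative and infinite holonomy the noncompact one. Your trivial-holonomy case (parallel sections giving a product neighborhood consisting of compact leaves) is complete and is exactly the paper's argument.

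The infinite-holonomy case, however, has a genuine gap, located precisely where you flag "the main obstacle." The linear holonomy map $h\colon r\mapsto qr$ is only a pseudogroup element with a restricted domain: for $h$ to move a point of the transversal along a leaf of $\mathcal{D}^\perp\nnh$, the horizontal lift of the loop $\gamma$ starting at that point must remain inside $U'\nnh$, since outside $U'$ horizontal curves of $\dla$ no longer correspond to curves in $M$ tangent to $\mathcal{D}^\perp\nnh$. By linearity, the lift starting at $r$ has norm bounded by $|r|R$, where $R$ is the norm distortion of the lift of $\gamma$ through a unit vector (a quantity depending on the base point $x_0$ and the chosen loop), so $h$ is realized inside $U'$ only when $|r|R<\ve$. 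Your iteration therefore proves that the tail $\{q^nr_0:n\ge k\}$ lies on a single noncompact leaf $L'\nnh$, but it does not prove that the leaf through the original point $r_0$ is noncompact: that leaf need not equal $L'\nnh$, because $r_0$ may not be connectible to the tail by horizontal paths inside $U'\nnh$. Consequently the statement actually required, that all leaves meeting $U\smallsetminus L$ are noncompact for some neighborhood $U$ of $L$, is not established. The paper closes exactly this hole: starting from one vector $u$ it forms $Q_x=\bigcup_{i\ge k}q^i\lf_x\subseteq U_\ve$, then scales by $a\in(-1,1)\smallsetminus\{0\}$, so that the points $aq^ku$ — which fill a punctured interval about $0$ in the fibre and whose entire forward orbits, connecting lifts included, stay in $U_\ve$ — all lie on noncompact leaves; it then thickens this fibrewise statement to an open set $U(x)\subseteq M$ by means of a local nonzero parallel section of $\dla$, and covers $L$ by such sets. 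Adding this shrinking-and-uniformization step (restrict to points with $|r|R<\ve$, and verify via parallel sections or continuity of lifts that such points fill a neighborhood of $L$) would make your proof complete.
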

\begin{proof}Theorem~\ref{hlncl} allows us to treat some neighborhood
$\,\,U'$ of the zero section $\,L\,$ in the line bundle 
$\,\dla$ as a neighborhood of $\,L\,$ in $\,M\nh$.

Any fixed fibre metric in the line bundle $\,\dla$ gives
rise to the norm function $\,N:\dla\to[\hs0,\infty)\,$ on
the total space $\,\dla$, and to radius $\,\ve\,$
interval sub\-bundles 
$\,\,U\hskip-2.6pt_\ve\w=N^{-\nnh1}\nnh([\hs0,\ve))
\subseteq\dla$, where
$\,\ve>0$. As $\,L\,$ is compact, $\,\,U\hskip-2.6pt_\ve\w\subseteq U'$ for
$\,\ve\,$ near $\,0$, which turns $\,\,U\hskip-2.6pt_\ve\w$ into a
neighborhood of $\,L\,$ in $\,M\nh$, and is expressed as
$\,\,U\hskip-2.6pt_\ve\w\subseteq M\nh$.

First let $\,\,H\hskip-2.5pt_L\w$ be trivial. The total space of $\,\dla$ is
thus the union of global parallel sections obtained from one such section (on
which the norm function $\,N\hs$ has some maximum value $\,r>0$) via
multiplication by constants $\,a\ne0$. If $\,|\hs a\hh|\in(0,\ve/r)$, the
resulting parallel sections are thus contained in
$\,\,U\hskip-2.6pt_\ve\w\subseteq M\nh$, and hence constitute compact leaves
of $\,\mathcal{D}^\perp\nnh$. This is the second option in the ei\-ther-or
clause of (\ref{cpl}).

Next, let $\,\,H\hskip-2.5pt_L\w$ be infinite. We fix
$\,q\in \,H\hskip-2.5pt_L\w\nh\cap\hs(0,1)\,$ and, for any
$\,x\in L$, choose a piecewise $\,C^1$ loop $\,\lp_x\w$ at $\,x\,$ in
$\,L\,$ such that the parallel transport along $\,\lp_x\w$ in the line
bundle $\,\dla$ equals the multiplication by
$\,q\,$ in the line $\,\dxa$. For any
$\,u\in\dxa\hn\cap\hs U\hskip-2.6pt_\ve\w$ the horizontal lift
$\,\lf_x\w$ of $\,\lp_x\w$ with the initial point $\,u\,$ has the terminal
point $\,qu$. Treating $\,\lf_x\w$ as a compact subset of the total space
$\,\dla$, on which the norm function $\,N\hs$ has some
maximum value $\,r\nnh_x\w>0$, we may form the union 
$\,Q\nh_x\w\nh\nnh=\bigcup\nh_{i=k}^\infty\hs q^i\nh\lf_x\w$ for the least
integer $\,k\ge0\,$ with $\,q\hh^kr\nnh_x\w<\ve$. Thus,
$\,Q\nh_x\w\subseteq\hs U\hskip-2.6pt_\ve\w$ is a union of piecewise $\,C^1$
horizontal curves in the total space $\,\dla$, joined
end-to-end, and the same is true for $\,aQ\nh_x\w$ whenever
$\,a\in(-\nnh1,1)\smallsetminus\{0\}\,$ which, as
$\,\,U\hskip-2.6pt_\ve\w\subseteq M\nh$, makes 
$\,aQ\nh_x\w$ a subset of a leaf $\,L_{(a)}\w$ of
$\,\mathcal{D}^\perp\hskip-.5pt$ with
$\,L_{(a)}\w\nnh\subseteq\hs U\hskip-2.6pt_\ve\w\smallsetminus L$. Each
$\,L_{(a)}\w$ contains
the sequence $\,aq^i\nh u$, with integers $\,i\ge k$,
and $\,aq^i\nh u\,$ converges as $\,i\to\infty\,$ to $\,x\in L\,$ (the zero
vector in the line $\,\dxa$), so that the leaves
$\,L_{(a)}\w$ are all noncompact. At the same time, $\,aq\hh^k\nh u\in L_{(a)}\w$, with
our fixed $\,k\,$ and all $\,a\in(-\nnh1,1)\smallsetminus\{0\}$. Such
$\,aq\hh^k\nh u\,$ form a neighborhood of $\,0$, with $\,0$ itself removed,
in the line $\,\dxa$. Thus, in the portion of the radius
$\,\ve\,$ interval bundle $\,\,U\hskip-2.6pt_\ve\w$ over a neighborhood of
$\,x\,$ in $\,L\,$ on which $\,\dla$ has a nonzero parallel section,
the products of this parallel section by all real numbers sufficiently close 
to $\,0\,$ fill a neighborhood $\,\,U\nh(x)\,$ of $\,x\,$ in $\,M\,$ and the
leaves of $\,\mathcal{D}^\perp$ intersecting
$\,\,U\nh(x)\nh\smallsetminus L\,$ 
are all noncompact. Compactness of $\,L\hs$ allows us to choose finitely many
$\,x\in L\,$ such that $\,L\,$ is contained in the union
$\,\,U\hs$ of the corresponding sets $\,\,U\nh(x)$, which yields the first 
option in the ei\-ther-or clause of (\ref{cpl}).
\end{proof}

\section{Proofs of Theorems~\ref{maith} and~\ref{univc}}\label{pt} 
\setcounter{equation}{0}
To establish Theorem~\ref{maith}, fix a compact rank-one ECS manifold 
$\,(M\nh,\g)$. Passing to a two-fold isometric covering of $\,(M\nh,\g)$, if
necessary, we may also assume trans\-ver\-sal o\-ri\-enta\-bil\-i\-ty
of $\,\mathcal{D}^\perp\nnh$. Theorem~\ref{eithr} now implies (\ref{cpl}) for
$\,\mathcal{V}=\mathcal{D}^\perp\nnh$.

In addition, under the hypotheses of Theorem~\ref{maith}, there 
exists a compact leaf $\,L\,$ of $\,\mathcal{D}^\perp$ realizing the
second possibility in (\ref{cpl}), the one where some prod\-uct-like 
neighborhood of $\,L\,$ in $\,M$ is a union of compact leaves of
$\,\mathcal{D}^\perp\nnh$.

To prove this, note that either $\,(M\nh,\g)\,$ is locally homogeneous and
$\,\mathcal{D}^\perp\nnh$ has a compact leaf, or $\,(M\nh,\g)\,$ is not locally
homogeneous.

In the latter case, Theorem~\ref{admts} allows us to choose
a nonconstant $\,C^1$ function $\,\mu:M\to\bbR$, constant along
$\,\mathcal{D}^\perp\nnh$, and Remark~\ref{sards} implies (\ref{rng}). Letting
$\,\mu(x)\,$ be a regular value of $\,\mu$, and fixing a 
one-di\-men\-sion\-al sub\-man\-i\-fold of $\,M\hs$ containing $\,x$ and
trans\-verse to $\,\mathcal{D}^\perp\nnh$, we see that for every point $\,y\,$
in this sub\-man\-i\-fold lying sufficiently close to $\,x$, the connected
component, containing $\,y$, of the $\,\mu$-pre\-im\-age of $\,\mu(y)\,$ is a 
compact leaf of $\,\mathcal{D}^\perp\nnh$. This causes $\,L\,$ to satisfy the
second option in (\ref{cpl}) by obviously precluding the first one.

Consider now the former case: local homogeneity along with the existence of a 
compact leaf $\,L$. 
By (\ref{rnz}) and (\ref{par}), the line bundle $\,\dla$ with the connection
(\ref{tfl}) has the global parallel section $\,w'$ and, consequently, its
holonomy group $\,H\hskip-2.5pt_L\w$ is trivial. This leads again, via
Theorem~\ref{eithr}, to the second option in (\ref{cpl}).

Theorem~\ref{maith} is now immediate from
Theorem~\ref{bdcir} applied to $\,\mathcal{V}=\mathcal{D}^\perp\nnh$.

As pointed out in the Introduction, Theorem~\ref{univc} trivially follows 
from Theorem~\ref{maith} except when $\,(\hm\nh,\hg)\,$ is locally 
homogeneous. On the other hand, in the lo\-cal\-ly-ho\-mo\-ge\-ne\-ous case,
(\ref{rnz}) and (\ref{ivf}) imply that $\,|f|^{1/2}dt\,$ is a closed
$\,\Gm\nh$-in\-var\-i\-ant $\,C^\infty\nnh$ $\,1$-form without zeros on
$\,\hm$. Theorem~\ref{univc} is now obvious from Lemma~\ref{gnobs}.

\section{Further consequences}\label{fc}
\setcounter{equation}{0}
Let $\,(\hm\nh,\hg)\,$ be the pseu\-do\hs-Riem\-ann\-i\-an universal covering
space of a compact rank-one ECS manifold $\,(M\nh,\g)$, with the Ol\-szak
distribution $\,\mathcal{D}$, and the universal covering projection
$\,\pi:\hm\to M=\hm\nnh/\hh\Gm$, cf.\ (\ref{ucp}). As in Section~\ref{an}, we
assume transversal orientability of the orthogonal complement
$\,\mathcal{D}^\perp\nnh$.

For future reference, we state here three consequences of the above
assumptions.

First, $\,\hm\,$ {\it admits a smooth positive function\/
$\,\psi:\hm\to(0,\infty)\,$ for which the\/ $\,1$-form $\,\psi\,dt\,$ is
both\/ $\,\Gm\nh$-in\-var\-i\-ant} 
(in other words, $\,\pi$-pro\-ject\-a\-ble onto $\,M$), {\it and closed}
(which amounts to requiring that $\,\psi\,$ be, locally, a function of $\,t$).

In fact, in the lo\-cal\-ly-ho\-mo\-ge\-ne\-ous case (or, more generally,
when $\,\mathrm{Ric}\ne0$ everywhere), (\ref{ivf}) allows us to choose
$\,\psi=|f|^{1/2}\nh$.

If $\,(M\nh,\g)\,$ is not locally homogeneous, $\,\mathcal{D}^\perp\nnh$, on
$\,M\nh$, is the vertical distribution of a fibration $\,M\nh\to S^1\nh$.
(This is Theorem~\ref{maith}.) We obtain our $\,\psi\,dt$, or its opposite,
by pulling
back from $\,S^1$ to $\,M\,$ a smooth $\,1$-form without zeros.

Secondly, {\it the parallel vector field\/ $\,w=\nabla\hn t\,$ on 
$\,\hm\nh$, spanning\/ $\,\mathcal{D}$, which appears in\/} (\ref{wnt}), {\it
is complete}.
Namely, for $\,\psi:\hm\to(0,\infty)\,$ as above, $\,\Gm\nh$-in\-var\-i\-ance
of $\,\psi\,dt$ implies the same for $\,\psi w\,$ (since $\,w=\nabla\hn t$,
that is, $\,dt=\g(w,\,\cdot\,)$). Completeness of $\,\psi w\,$ now follows
due to its resulting $\,\pi$-pro\-ject\-a\-bil\-i\-ty onto the compact manifold
$\,M\nh$. However, our $\,\psi\,$ is, locally, a function of $\,t\,$ and, by
(\ref{ker}), $\,\mathcal{D}^\perp\nnh=\hs\mathrm{Ker}\,dt\,$ on $\,\hm\nh$. 
This makes $\,\psi\,$ constant along every leaf of $\,\mathcal{D}^\perp$ and
$\,w\,$ tangent to the leaf. The integral curves of $\,\psi w\,$ are thus
af\-fine re\-pa\-ram\-e\-tri\-za\-tions of those of $\,w$, and so $\,w$ is
complete as well.

Finally, {\it the levels of\/ $\,t:\hm\nh\to\bbR\,$ are all connected, and
coincide with the leaves of\/ $\,\mathcal{D}^\perp\hskip-2pt$ in\/}
$\,\hm\nh$.  Thus, if $\,\chi:\hm\nh\to\bbR\,$ is locally a function of $\,t$,
it must also be one globally, in the sense of being a composite of $\,t\,$
with some function $\,t(\hm)\to\bbR$, which applies, in particular, to
$\,\chi=f\,$ appearing in (\ref{ric}) -- (\ref{flt}).

To see this, use Theorem~\ref{univc}: the leaves of
$\,\mathcal{D}^\perp\hskip-2pt$ in $\,\hm\,$ are the factor manifolds of a
global product decomposition of $\,\hm\nh$, some open interval 
$\,I\hn'\nh\subseteq\bbR\,$ being the one-di\-men\-sion\-al factor. The
leaves are thus connected, and $\,t:\hm\nh\to\bbR$, 
constant along them due to (\ref{ker}), and having a nonzero parallel
gradient -- see (\ref{wnt}) -- descends to a strictly monotone function 
$\,I\hn'\nh\to\bbR$, the levels of which thus are single points. This makes the
levels of $\,t\hh$ equal to single leaves of 
$\,\mathcal{D}^\perp\nnh$, and hence connected.

\section*{Appendix: The Lo\-rentz\-i\-an case}\label{lc}
\setcounter{equation}{0}
From now on we always assume transversal orientability of
$\,\mathcal{D}^\perp\nnh$, for the Ol\-szak distribution $\,\mathcal{D}\,$ of
the ECS manifold $\,(M\nh,\g)\,$ in question, which makes $\,\mathcal{D}\,$
a trivial line bundle over $\,M\nh$.

The purpose of this section is to inform the reader how the proof of 
Theorem~\ref{maith} differs from that for Lo\-rentz\-i\-an ECS manifolds 
in \cite{derdzinski-roter-08}, and specifically what issues arise in higher
signatures and how they are dealt with. We already explained in the 
Introduction why Theorem B of the paper \cite{derdzinski-roter-08} -- 
the Lo\-rentz\-i\-an case of our Theorem~\ref{maith} -- does not require 
assuming rank one or excluding local homogeneity.

Let us begin by outlining the proof of \cite[Theorem B]{derdzinski-roter-08},
given in \cite{derdzinski-roter-08}. First -- as
we pointed out in the lines following (\ref{ucp}), and in (\ref{tcf}) -- in 
any rank-one ECS manifold $\,(M\nh,\g)\,$ the Le\-vi-Ci\-vi\-ta connection
$\,\nabla\hs$ induces natural flat connections both in the Ol\-szak
distribution $\,\mathcal{D}\,$ and in the quotient bundle 
$\,\mathcal{E}=\mathcal{D}^\perp\hskip-2.3pt/\mathcal{D}\,$ 
over $\,M\nh$, while -- see \cite[Sect.\,4]{derdzinski-roter-08} -- the Weyl
tensor $\,W\hs$ leads to a vec\-tor-bun\-dle morphism 
$\,\varPhi:(\mathcal{D}^*)^{\otimes2}\to\hs(\mathcal{E}^*){}^{\otimes2}$ with
$\,\varPhi\hskip-2pt_x\w(\lambda\otimes\lambda'\hh):\mathcal{E}\nh_x\w
\times\mathcal{E}\nh_x\w\to\bbR$, for any $\,x\in M\,$ and
$\,\lambda,\lambda'\in\dxa$, equal to the 
symmetric bi\-lin\-e\-ar form sending the cosets $\,v+\mathcal{D}\nh_x\w$ and 
$\,v\hh'\nh+\mathcal{D}\nnh_x\w$ of vectors
$\,v,v\hh'\nh\in\mathcal{D}\nnh_{\nh x}^{\hs\perp}$
to $\,W\hskip-3pt_x\w(v,u,u\hh'\nnh,v\hh'\hs)$, where $\,u,u\hh'\nh\in\txm\,$
are any vectors with $\,\lambda=g_x\w(u,\,\cdot\,)\,$ and
$\,\lambda'\nh=g_x\w(u\hh'\nnh,\,\cdot\,)\,$ on $\,\mathcal{D}\nnh_x\w$. As 
observed in \cite[Sect.\,4]{derdzinski-roter-08}, the morphism
$\,\varPhi\,$ is well-defined, parallel relative to natural flat the
connections in the bundles involved, and nonzero (which makes it injective)
at every point $\,x\in M$. So far the metric signature of $\,\g\,$ was
arbitrary: but when it is Lo\-rentz\-i\-an, the fibre metric in
$\,\mathcal{E}=\mathcal{D}^\perp\hskip-2.3pt/\mathcal{D}\,$ induced by $\,\g\,$
is positive definite, leading, via injectivity of $\,\varPhi$, to a parallel
fibre norm $\,|\hskip2.3pt|\,$ in the line bundle $\,\mathcal{D}$. Cf.\
\cite[the end of Sect.\,4]{derdzinski-roter-08}. This proves 
\cite[Theorem D]{derdzinski-roter-08}: for the Ol\-szak
distribution $\,\mathcal{D}\,$ of any Lo\-rentz\-i\-an ECS 
manifold, it follows (from transversal orientability of
$\,\mathcal{D}^\perp$) that
\begin{enumerate}
\item[(a)] $\mathcal{D}\,$ is spanned  by a global parallel section $\,w$,
namely, one with $\,|w|=1$.
\end{enumerate} 
We can now paraphrase the remainder of the proof of
\cite[Theorem B]{derdzinski-roter-08}, which consists of the paragraph
preceding \cite[Remark 5.1]{derdzinski-roter-08}, followed by
\cite[Lemma 1.2]{derdzinski-roter-08}, and instead of assuming the
Lo\-rentz\-i\-an signature, uses only (a); the symbols
$\,\xi,u,\rho,\phi\,$ and $\,\psi\hskip-2pt=\hskip-2pt\theta$ of 
\cite{derdzinski-roter-08} correspond to
our $\,dt,w,\mathrm{Ric},(2-n)\dot f\,$ and $\,(2-n)\fh\nh$. On the
pseu\-\hbox{do\hs-}Riem\-ann\-i\-an
universal covering space $\,(\hm\nh,\hg)\,$ of $\,(M\nh,\g)\,$ the pull\-back
of $\,w\,$ with (a), still denoted by $\,w$, equals $\,\nabla\hn t$, and
$\,M=\hm\nnh/\hh\Gm\nh$, as in (\ref{wnt}) and (\ref{ucp}). Milnor's argument
\cite[p.\ 12]{milnor} is used in
\cite[proof of Lemma 1.2]{derdzinski-roter-08} to show that the levels of
$\,t\,$ in $\,\hm\,$ are connected: the word `locally' 
in (\ref{flt}) may be skipped. Due to $\,\Gm\nh$-in\-var\-i\-ance of
$\,w=\nabla\hn t$, one has (\ref{tqp}) with $\,q=1$, and the first line of
(\ref{fcg}) gives periodicity of $\,f\,$ as a function of $\,t\,$ (since
a nontrivial element of $\,\Gm\hs$ must thus have $\,p\ne0$). Nonconstancy of
$\,f\nh$,
cf.\ (\ref{ric}), implies that the values of $\,p\,$ arising from $\,\Gm\hs$
form a cyclic sub\-group of $\,\bbR\,$ 
with a unique 
generator $\,c>0$. Thus,
\begin{enumerate}
\item[(b)] $t:\hm\to\bbR\,$ descends to a bundle projection 
$\,M\to\bbR/c\hh\bbZ\hh=S^1\nnh$,
\end{enumerate}
which completes the proof of
\cite[Theorem B]{derdzinski-roter-08}.

As we already pointed out, the above proof remains valid if the
Lo\-rentz\-i\-an hypothesis is replaced by the weaker condition (a),
transversal orientability of $\,\mathcal{D}^\perp\nnh$ being always assumed.
However, a compact rank-one ECS manifold does not have to be {\it
translational\/} in the sense of satisfying (a) -- in 
other words, $\,q\ne1\,$ may occur in (\ref{fcg}), and then
$\,M=\hm\nnh/\hh\Gm\nh$, with its ECS metric, is referred to as {\it
di\-la\-tion\-al}. The existence of di\-la\-tion\-al-type compact rank-one ECS 
manifolds, including lo\-cal\-ly-ho\-mo\-ge\-ne\-ous ones, was established
quite recently in \cite[Theorems 6.1 and B.1]{derdzinski-terek-cl} --
and for them, instead of (b), one gets the different conclusion (c) appearing
below.

Compared to the above derivation of (b), the path leading to our proof of
Theorem~\ref{maith} is rather indirect, and we outline it here by briefly 
summarizing, in the following five sentences, the five paragraphs (or
two\hs-par\-a\-graph parts) of Section~\ref{om} that begin with the 
phrases `This is achieved', `Returning', `Finally', `First' and `On the other
hand'. Namely, we start by showing that $\,\mathcal{D}^\perp\nnh$ satisfies 
condition (\ref{cpl}). To derive (\ref{cpl}) for $\,\mathcal{D}^\perp\nnh$ 
we use the fact that -- since in $\,\hm\,$ the leaves of 
$\,\mathcal{D}^\perp\nnh$ are the connected components of levels of $\,t\,$ 
and $\,dt\,$ is parallel -- the leaf holonomy of any compact leaf of 
$\,\mathcal{D}^\perp\nnh$ may be dif\-feo\-mor\-phic\-al\-ly identified with 
its nor\-mal-con\-nec\-tion holonomy. Next, we introduce a vector space 
$\,\mathcal{F}$ of functions $\,\hm\to\bbR$, obviously having {\it either a
finite or an infinite dimension}. The former italicized case implies local 
homogeneity. The latter one causes a natural linear operator 
$\,\mathcal{F}\to H^1\nh(M\nh,\hs\bbR)$ to be noninjective, and
a nontrivial function lying in its kernel leads, via Sard's theorem, to
a compact leaf of $\,\mathcal{D}^\perp\nnh$ satisfying the second 
option in the ei\-ther-or clause of (\ref{cpl}), thus implying compactness
of all leaves of $\,\mathcal{D}^\perp\nnh$ and the conclusion that they form 
the fibres of a bundle projection $\,M\nh\to S^1\nh$.

A final remark: our proof of Theorem~\ref{maith} does
eventually lead to a conclusion analogous to (b), but different from it -- 
namely, in the
di\-la\-tion\-al case, unless $\,(M\nh,\g)$ is locally homogeneous, with
$\,\mathcal{D}^\perp\nnh$ still assumed transversally orientable,
\begin{enumerate}
\item[(c)] $\taw:\hm\to\bbR\,$ descends to a bundle projection 
$\,M\to\bbR/\bbZ\hh=S^1\nnh$.
\end{enumerate}
Here our choice of $\,t\,$ has been modified by an af\-fine substitution so
that the range $\,t(\hm)\,$ equals $\,(0,\infty)$, and
$\,\taw=(\log t)/(\log q)\,$ with suitably chosen
$\,q\in(0,\infty)\smallsetminus\{1\}$. Cf.\
\cite[the first paragraph proof of the of Theorem 2.3]{derdzinski-terek-ro}.

\newpage

\end{document}